\definecolor{Red}{rgb}{1,0,0}
\definecolor{Green}{rgb}{0,.6,0}
\definecolor{Blue}{rgb}{0,0,1}
\newcommand{\bl}{\color{Blue}}
\theoremstyle{definition}
\newtheorem{definition}{Definition}[section]
\theoremstyle{remark}
\newtheorem*{remark}{Remark}
\theoremstyle{plain}
\theoremstyle{plain}
\newtheorem{theorem}{Theorem}[section]
\theoremstyle{plain}
\newtheorem{assumption}{Assumption}[section]
\providecommand{\keywords}[1]
{
	\small	
	\textbf{Key words.} #1
}
\title{
	{Handling of constraints in multiobjective blackbox optimization}
	\thanks{
		{GERAD}
		and D\'epartement de math\'ematiques et g\'enie industriel,
		Polytechnique Montr\'eal,
		C.P. 6079, Succ. Centre-ville,
		Montr\'eal, Qu\'ebec, Canada H3C~3A7.
		{This work is supported by
		the NSERC RGPIN-2018-05286 Discovery Grant and an IVADO fellowship.}   }
}
\author{
	\href{mailto:bigeon-j@univ-nantes.fr}{\bl Jean Bigeon}\thanks{
		\href{bigeon-j@univ-nantes.fr}{\bl \url{bigeon-j@univ-nantes.fr}},
		\href{https://www.univ-nantes.fr/jean-bigeon}{\bl \url{www.univ-nantes.fr/jean-bigeon}}.
	}
	\and
	\href{mailto:sebastien.le.digabel@gerad.ca}{\bl S\'ebastien Le~Digabel}\thanks{
	\href{mailto:sebastien.le.digabel@gerad.ca}{\bl \url{sebastien.le.digabel@gerad.ca}},
	\href{https://www.gerad.ca/Sebastien.Le.Digabel}{\bl \url{www.gerad.ca/Sebastien.Le.Digabel}}.
	}
	\and
	\href{mailto:ludovic.salomon@gerad.ca}{\bl Ludovic Salomon}\thanks{
	    \href{mailto:ludovic.salomon@gerad.ca}{\bl \url{ludovic.salomon@gerad.ca}},
		\href{https://www.gerad.ca/people/ludovic-salomon}{\bl \url{www.gerad.ca/Ludovic.Salomon}}.
	}
}
\date{}
\pgfplotsset{compat=1.17}
\begin{document}
	
	\maketitle
	
	\begin{abstract}
		This work proposes the integration of two new constraint-handling approaches into the blackbox constrained multiobjective optimization algorithm DMulti-MADS, an extension of the Mesh Adaptive Direct Search (MADS) algorithm for single-objective constrained optimization. The constraints are aggregated into a single constraint violation function which is used either in a two-phase approach, where research of a feasible point is prioritized if not available before improving the current solution set, or in a progressive barrier approach, where any trial point whose constraint violation function values are above a threshold are rejected. This threshold is progressively decreased along the iterations. As in the single-objective case, it is proved that these two variants generate feasible and/or infeasible sequences which converge either in the feasible case to a set of local Pareto optimal points or in the infeasible case to Clarke stationary points according to the constraint violation function. Computational experiments show that these two approaches are competitive with other state-of-the-art algorithms.
	\end{abstract}

	\keywords{Multiple objective programming,
	Multiobjective optimization, derivative-free optimization, blackbox optimization, constrained optimization.}

	\section{Introduction}
	
	This work considers the following constrained multiobjective problem
	\begin{equation*}~\label{ref:MOP}
		MOP: \min_{x \in \Omega} f(x) = \left(f_1(x), f_2(x), \ldots, f_m(x)\right)^\top
	\end{equation*}
	where \(\Omega = \{x \in \mathcal{X} : c_j(x) \leq 0, \ j \in \mathcal{J}\} \subset \mathbb{R}^n\) is the \textit{feasible decision set}, and \(\mathcal{X}\) a subset of \(\mathbb{R}^n\). \(\mathbb{R}^n\) and \(\mathbb{R}^m\) are respectively designed as the \textit{decision space} and the \textit{objective space}. The functions \(f_i : \mathbb{R}^n \rightarrow \mathbb{R} \cup \{+ \infty\}\) for \(i = 1,2,\ldots, m\) and \(c_j : \mathbb{R}^n \rightarrow \mathbb{R} \cup \{+ \infty\}\) for \(j \in \mathcal{J}\) are the outputs of a program seen as a blackbox. In this context, no gradient is available nor cannot be approximated and one cannot make any assumption on the structure of the problem (differentiability, continuity, convexity) in absence of analytical expressions for the objective and the constraint functions. Many engineering applications, which involve several, costly and conflicting objectives, over a given set of constraints, fit into this framework (see for example~\cite{Alexandropoulos2019, DiPiKhuSaBe2009, Fang2005, Sharma2013}). For more general information on derivative-free methods, the reader is referred to~\cite{AuHa2017, CoScVibook, LaMeWi2019}.
	
	The description of the feasible decision set \(\Omega\) enables the modeller to distinguish different types of constraints~\cite{LedWild2015}. The set \(\mathcal{X}\) is the set of \textit{unrelaxable constraints}, which cannot be violated along the optimization process (e.g. strict bound constraints). The constraints in \(\Omega\) constitute the set of \textit{relaxable and quantifiable} \textit{constraints}, that can be violated during the optimization, i.e. the blackbox will execute and the constraints outputs can be aggregated as a measure of violation of the constraints. Finally, \textit{hidden constraints} constitute the set of points in the decision space for which the blackbox does not return any value, typically when the blackbox fails to execute. Allowing the \(f_i\) and \(c_j\) functions to take infinity values refers to this last type of constraints.
	
	Furthermore, in a multiobjective optimization context, due to the conflict between different objectives, a solution is not always optimal for all criteria. The goal is then to provide the set of best trade-off solutions to the decision maker~\cite{Branke2008Multiobjective,Collette2011Multiobjective, Miettinen_99_a}.

	In single-objective optimization, many algorithms have been proposed to solve blackbox constrained optimization problems: direct search methods via the use of a filter~\cite{AuDe09a, AuDe04a}, a merit function~\cite{GrVi2014} or an augmented Lagrangian~\cite{TGKolda_RMLewis_VTorczon_2006a}, a derivative-free linesearch algorithm coupled with a penalty function~\cite{LiLu2009}, or quadratic model-based approaches (see for example~\cite{AuCoLedPey2016,AuMa2014,DiEhMaPe2011}). The reader can refer to~\cite[Section~7]{LaMeWi2019} for a more thorough review.
	
	Evolutionary algorithms~\cite{Branke2008Multiobjective} are popular methods to tackle constrained multiobjective optimization blackbox problems. Firstly investigated in the context of bound-constrained or unconstrained blackbox optimization, researchers have adapted some of them to take into account inequality constraints (see~\cite{Yuan2021, Zhou2011} for more details). However, these methods are mostly stochastic heuristics. They practically require an important number of evaluations to perform. For example, the authors in~\cite{Yuan2021} suggest a budget comprised between \(2 \times 10^5\) and \(5 \times 10^5\) function evaluations in their experiments, which can be impracticable when evaluations are too costly. Surrogate models remove this limitation by substituting true blackboxes by less expansive surrogates, such as  radial basis functions (see~\cite{Regis2016}), or Gaussian processes (see~\cite{FeBeVa2017}).
	
	Recently, researchers have proposed extensions of convergence-based deterministic single-objective methods to multiobjective constrained optimization. Among the first ones, BiMADS~\cite{AuSaZg2008a,AuSaZg2008a} and MultiMADS~\cite{AuSaZg2010a} are scalarization-based algorithms. These two algorithms reformulate the multiobjective optimization problem into a succession of single-objective subproblems. Each of them is solved by the single-objective constrained blackbox MADS algorithm~\cite{AuDe2006, AuDe09a}. Practically, it can be difficult to correctly allocate the total budget of evaluations between all the subproblems, potentially resulting in a loss of evaluations required to improve the diversity and density of the current solution set.
	
	The Direct MultiSearch (DMS)~\cite{CuMaVaVi2010}, its variants~\cite{BraCu2020, CuMa2018, DedDesNa2021} and Derivative-Free MultiObjective (DFMO)~\cite{LiLuRi2016} algorithms consider a different approach. They all keep a list of current feasible best non-dominated solutions that they improve along the iterations. DMS extends single-objective direct search algorithms to constrained multiobjective optimization. It rejects non-feasible points via the use of an extreme barrier function approach, i.e. non feasible points are assigned infinity values. This approach does not exploit knowledge of constraint violations values, which could potentially help to improve the solution set. Furthermore, DMS imposes the use of a feasible starting point, which is not practically available (too costly) in a real engineering context. DFMO extends a derivative-free linesearch algorithm to constrained multiobjective optimization. By aggregating constraints with the objective functions via the use of a penalty function, DFMO reduces the initial constrained multiobjective optimization problem to a simple bound constrained multiobjective optimization problem, easier to solve. However, its convergence assumptions are a bit restrictive in a blackbox optimization context, i.e. constraints functions and objective functions must be Lipschitz continuous. On the contrary, DMS requires that objective functions should satisfy locally Lipschitz continuity. Besides, penalty function approaches can be sensitive to the scale of constraints (not always available in a blackbox context) and their penalty parameters values.
	
	Based on these remarks, this work proposes two other ways to handle blackbox constraints, based on the DMulti-MADS algorithm~\cite{BiLedSa2020,BiLedSa2020}. DMulti-MADS is an extension of the MADS algorithm to multiobjective optimization, strongly inspired by the DMS and BiMADS algorithms. It possesses stronger convergence properties than DMS. At the same time, experiments have shown its competitiveness according to state-of-the-art solvers on synthetic bound-constrained problems~\cite{BiLedSa2020}. Similarly to DMS, the first version of DMulti-MADS, described in~\cite{BiLedSa2020} requires a feasible starting point. The two extensions described below remove this limitation. The first one is an extension of the two-phase MADS algorithm described in~\cite{AuDeLe10} to constrained multiobjective optimization, named DMulti-MADS-TEB. The second version is an extension of the MADS algorithm with progressive barrier~\cite{AuDe09a} to constrained multiobjective optimization, designed as DMulti-MADS-PB. Contrary to a penalty function approach, this last method
	\begin{itemize}
		\item is less sensitive to the scale of the outputs of the blackbox as it does not aggregate the constraints with the objective function;
		\item allows to explore around several incumbent points and not just one;
		\item is proved to be have stronger convergence properties than DFMO.
	\end{itemize}
	
	This work is organized as follows. Section~\ref{sect:Pareto_dominance} provides a summary of multiobjective optimization concepts. Section~\ref{sect:Presentation_DMulti_MADS} introduces the core elements of the DMulti-MADS algorithm. Section~\ref{sect:handling_constraints_description} describes the DMulti-MADS-TEB and DMulti-MADS-PB variants to handle blackbox constraints. Main convergence results are detailed in Section~\ref{sect:Convergence_analysis}. Finally, experiments are conducted in Section~\ref{sect:Numerical_experiments} on synthetic benchmarks and three real engineering applications in comparison with other state-of-the-art solvers. 
	
	\section{Pareto dominance and optimal solutions in multiobjective optimization}\label{sect:Pareto_dominance}
	
	This section summarizes some notation and concepts of multiobjective optimization. In order to characterize optimal solutions, one needs the concept of \textit{Pareto dominance}~\cite{Miettinen_99_a}.
	
	\begin{definition}
		Given two feasible decision vectors \(x^1\) and \(x^2\) in \(\Omega\), 
		\begin{itemize}
			\item \(x^1 \preceq x^2\) (\(x^1\) \textit{weakly dominates} \(x^2\)) if and only if \(f_i(x^1) \leq f_i(x^2)\) for \(i = 1,2, \ldots, m\).
			\item \(x^1 \prec x^2\) (\(x^1\) \textit{dominates} \(x^2\)) if and only if \(f_i(x^1) \leq f_i(x^2)\) for \(i = 1,2, \ldots, m\) and there exists at least an index \(i_0 \in \{1,2, \ldots, m\}\) such that \(f_{i_0}(x^1) < f_{i_0}(x^2)\).
			\item \(x^1 \sim x^2\) (\(x^1\) and \(x^2\) are \textit{incomparable}) if and only if \(x^1\) does not dominate \(x^2\) and \(x^2\) does not dominate \(x^1\).
		\end{itemize} 
	\end{definition}

	With this definition, one is able to characterize locally optimal solutions and global optimal solutions in a multiobjective context.
	
	\begin{definition}
		A feasible decision vector \(x^\star \in \Omega\) is said to be (globally) \textit{Pareto optimal} if it does not exist any other decision vector \(x \in \Omega\) which dominates \(x^\star\).
	\end{definition}

	\begin{definition}
		A feasible decision vector \(x^\star \in \Omega\) is said to be \textit{locally Pareto optimal} if it exists a neighbourhood \(\mathcal{N}(x^\star)\) of \(x^\star\) such that there does not exist any other  decision vector \(x \in \mathcal{N}(x^\star) \cap \Omega\) which dominates \(x^\star\).
	\end{definition}

	The set of all Pareto optimal solutions in \(\Omega\) is called the \textit{Pareto set} denoted by \(\mathcal{X}_p\) and its image by the objective function is designed as the \textit{Pareto front} denoted by \(\mathcal{Y}_P \subseteq \mathbb{R}^m\). Any set of locally Pareto optimal solutions is called a \textit{local Pareto set}. Ideally, one would wish to find the entire Pareto set and consequently the entire Pareto front. But the Pareto set may be composed of an infinite number of solutions. In practice, an algorithm tries to find a representative set of nondominated points, denoted as a \textit{Pareto set approximation}~\cite{ZiKnTh2008} (its mapping by the objective function \(f\) is designed as a \textit{Pareto front approximation}). In the best case, a Pareto set approximation should be a subset of the Pareto set or a locally Pareto set, but this condition is not always satisfied.
	
	Several objective vectors, i.e. points in the objective space, play an important role in multiobjective optimization as bounds on the Pareto front.
	The \textit{ideal objective vector}~\cite{Miettinen_99_a} \(y^I \in \mathbb{R}^m\) bounds the Pareto front from below and is defined as
	\[y^I = \left(\min_{x \in \Omega} f_1(x), \min_{x \in \Omega} f_2(x), \ldots, \min_{x \in \Omega} f_m(x) \right)^\top.\]
	From each component of the ideal objective vector, one can obtain information on the \textit{extreme points of the Pareto set}, i.e. the elements of the Pareto set and solutions of each single-objective problem \(\min_{x \in \Omega} f_i(x)\) for \(i = 1,2, \ldots, m\).
	The \textit{nadir objective vector}~\cite{Miettinen_99_a} \(y^{\text{N}} \in \mathbb{R}^m\) provides an upper bound on the Pareto front. It is defined as
	\[y^{\text{N}} = \left(\max_{x \in \mathcal{X}_P} f_1(x), \max_{x \in \mathcal{X}_P} f_2(x), \max_{x \in \mathcal{X}_P} f_2(x), \ldots, \max_{x \in \mathcal{X}_P} f_m(x) \right)^\top.\]
	
	\section{The DMulti-MADS algorithm}\label{sect:Presentation_DMulti_MADS}
	
	DMulti-MADS~\cite{BiLedSa2020} is a direct search iterative method designed to solve constrained multiobjective blackbox optimization problems. It is an extension of the MADS~\cite{AuDe2006} algorithm to multiobjective optimization, strongly inspired by the DMS~\cite{CuMaVaVi2010} and BiMADS algorithms~\cite{AuSaZg2008a}. The notations and following definitions are taken from~\cite{AuDe09a, AuHa2017}.
	
	\begin{definition}
		At iteration \(k\), the \textit{set of feasible incumbent solutions} is defined as
		\[F^k = \left\{\arg \min_{x \in V^k} \left\{f(x): x \in \Omega\right\}\right\}\]
		where \(V^k \subset \mathcal{X}\) is the set of trial points which have been evaluated by the start of iteration \(k\).
	\end{definition}
	 
	Thus, all points in \(V^k\) satisfy the set of unrelaxable constraints \(\mathcal{X}\). \(V^0 \neq \emptyset \subset \mathcal{X}\) is then the set of starting points provided by the user. DMulti-MADS keeps an \textit{iterate list of best feasible incumbents} found until iteration \(k\), denoted as \(L^k_F\) and defined as
	\[L^k_F = \left\{ (x^l, \Delta^l) : x^l \in F^k \text{ and } \Delta^l > 0, \ l = 1,2, \ldots, |L^k_F|  \right\}\]
	where \(\Delta^l\) is the \textit{frame size parameter} associated to the \(l\)th non-dominated point \(x^l\) of the list \(L^k_F\). As \(L^k_F\) keeps only feasible non-dominated points, it is possible that \(|F^k| \neq |L^k_F|\).
	
	At the beginning of each iteration \(k\), DMulti-MADS selects an element \((x^k, \Delta^k)\) of the list \(L^k_F\) as the current \textit{feasible frame center}, and generates a finite number of new candidates. To ensure the convergence properties, all generated candidates during iteration \(k\) must belong to the mesh \(M^k\) defined by
	\[M^k = \bigcup_{x \in V^k} \{x + \delta^k Dz : z \in \mathbb{N}^{n_D}\} \subset \mathbb{R}^n\]
	where \(\delta^k > 0\) is the \textit{mesh size parameter}; \(D = G Z \in \mathbb{R}^{n \times n_D}\) is a matrix whose columns form a positive spanning set for \(\mathbb{R}^n\) (see~\cite[Chapter 6]{AuHa2017} or~\cite[Chapter 2]{CoScVibook}) for some non-singular matrix \(G \in \mathbb{R}^{n \times n}\) and some integer matrix \(Z \in \mathbb{Z}^{n \times n_D}\). Note that \(G, Z\) and \(D\) do not depend on the iteration indexes. Generally, \(G\) and \(Z\) are chosen such as \(G = I_n\) and \(Z = [I_n \ - I_n] = D\), with \(I_n\) the identity matrix of dimensions \(n \times n\). Furthermore, the current incumbent selection must satisfy at least the following condition:
	\[(x^k, \Delta^k) \in \left\{(x, \Delta) \in L^k_F: \tau^{w^+} \Delta^k_{\max} \leq \Delta \leq \Delta^k_{\max} \right\}\]
	where \(\tau \in \mathbb{Q} \cap (0,1)\) is the \textit{frame size adjustment parameter}, \(w^+ \in \mathbb{N}\) a fixed integer and \(\Delta^k_{\max}\) the \textit{maximum frame size parameter} at iteration \(k\) defined as 
	\[\Delta^k_{\max} =  \max_{(x, \Delta) \in L^k_F} \Delta.\]
	The mesh size parameter \(\delta^l\) and frame size parameter \(\Delta^l\) associated to the \(l\)th non-dominated point \(x^l\) of \(L^k_F\) are linked to each other such that \(0 < \delta^l \leq \Delta^l\). When a subsequence of one of them goes to \(0\), so does the other. Typically, the following relation \(\delta^l = \min \left\{ \Delta^l, (\Delta^l)^2 \right\}\) meets these requirements.
	
	Each iteration is decomposed into two steps: the \textit{search} and the \textit{poll}. The search is an optional and flexible step which enables the user to design any strategy as long as the proposed trial points belong to the mesh \(M^k\) and their number is finite. A common strategy is the use of surrogate models, proposed for example in~\cite{BraCu2020}. The finite set of points used in the search is denoted by \(S^k\).
	
	The poll is more rigorously defined, as the convergence analysis depends on it. The trial points involved in this step, named the poll set and denoted by \(P^k\), must satisfy some specific requirements. More precisely, the construction of \(P^k\) involves the use of the current incumbent \(x^k\) and its associated frame size \(\Delta^k\) and mesh size \(\delta^k\) parameters to obtain a positive spanning set \(\mathbb{D}^k_{\Delta}\). Each column of \(\mathbb{D}^k_{\Delta}\) must be a nonnegative integer combination of the directions in \(D\); the distance from the current incumbent \(x^k\) to a poll point must be bounded by a multiple of the frame size parameter \(\Delta^k\). Note that the relation between \(\delta^k\) and \(\Delta^k\) given above meets these requirements. Formally, \(P^k\) is described as
	\[P^k = \{x^k + \delta^k d : d \in \mathbb{D}^k_{\Delta}\} \subset M^k.\]
	All new candidates generated during the search and the poll are assigned a frame size parameter value larger or equal to the frame size parameter of the current feasible frame center.
	
	If a new generated candidate dominates the current feasible incumbent, the iteration is marked as a \textit{success}. Otherwise, it is a \textit{failure} and the frame size parameter (and so the mesh size parameter) of the current feasible frame center is decreased. The iteration can be opportunistic, meaning that as soon as it is successful, the remaining candidates (if they exist) are not evaluated. In all cases, the iterate list \(L^k_F\) is filtered to keep only best non-dominated feasible points found until the end of this iteration.
	
	More details can be found in~\cite{BiLedSa2020}.
		
	\section{Handling of constraints with DMulti-MADS}\label{sect:handling_constraints_description}
	
	This section details several strategies to handle constraints with DMulti-MADS. The set of quantifiable and relaxable constraints is given by \(\Omega = \left\{ x \in \mathcal{X} : c_j(x) \leq 0, \ j \in \mathcal{J} \right\}\). A relaxable constraint can be violated during the optimization and still returns meaningful outputs for the blackbox. A quantifiable constraint provides a measure of violation of feasibility. All other types of constraints (unrelaxable, hidden, non quantifiable), if present are added to \(\mathcal{X}\).
	
	\subsection{The constraint violation function}
	
	Exploiting constraints to guide the algorithm towards optimal solutions requires a way to quantify constraint violations. The strategies described below rely on the \textit{constraint violation function} \(h : \mathbb{R}^n \rightarrow \mathbb{R} \cup \{+ \infty\}\) used in~\cite{AuDe09a} and defined by
	\[h(x) = \begin{cases}
		\displaystyle\sum_{j \in \mathcal{J}} \left(\max \left\{c_j(x), 0 \right\}\right)^2 & \text{if } x \in \mathcal{X}, \\
		+\infty & \text{otherwise}.
	\end{cases}\]
	With this definition, \(x\) belongs to \(\Omega\) if and only if \(h(x) = 0\), and \(0 < h(x) < + \infty\) when \(x\) is infeasible but belongs to \(\mathcal{X} \setminus \Omega\). The use of a squared function instead of common \(\ell^1\) norm enables some conservation of first-order smoothness properties.
	
	\subsection{The extreme barrier (EB)}
	
	Similarly to DMS~\cite{CuMaVaVi2010}, the original version of the DMulti-MADS algorithm~\cite{BiLedSa2020} treats constraints via the use of an extreme barrier approach. It replaces the objective function \(f\) by
	\[f_{\Omega}(x) = \begin{cases}
		\left(+ \infty, + \infty, \ldots, + \infty\right)^\top & \text{if } x \notin \Omega, \\
		f(x) & \text{otherwise}.
	\end{cases} \]
	In other terms, all infeasible points are assigned an infinite value. This approach requires a feasible starting point, which is not always available in an engineering context. To allow the use of an infeasible starting point, this work proposes a \textit{Two-phase Extreme Barrier (TEB)} approach, in the continuation of~\cite{AuDeLe10}. When starting from an infeasible point, the new strategy, called DMulti-MADS-TEB, performs a single-objective minimization of the \(h\) constraint violation function using the MADS algorithm. As soon as a feasible point is found, DMulti-MADS-TEB moves to the second phase, which is the minimization of~\ref{ref:MOP} from the feasible point found in the first phase.
	
	Although this approach is simple,
	its performance has never been investigated in the context of deterministic multiobjective derivative-free optimization. It also shares some convergence properties with MADS and DMulti-MADS, summarized in Section~\ref{sect:Convergence_analysis}. Note that this strategy can be applied to any multiobjective blackbox algorithm.
	
	\subsection{The progressive barrier (PB)}
	
	This subsection introduces the DMulti-MADS-PB extension of the single-objective MADS-PB algorithm~\cite{AuDe09a} for multiobjective derivative-free optimization.
	
	\subsubsection{Feasible and infeasible incumbents}
	
	Similarly to the MADS-PB algorithm~\cite{AuDe09a}, DMulti-MADS-PB constructs two sets of incumbent solutions from \(V^k\). \(F^k\) still denotes the set of feasible incumbent solutions. To define the set of infeasible incumbent solutions, one needs to extend the notion of dominance for infeasible solutions, as it is required in the design of filter algorithms~\cite{FlLe02a, FlLeTo02a}.
	
	\begin{definition}[Dominance relation for constrained multiobjective optimization]
	In the context of constrained multiobjective optimization, \(x^1 \in \mathcal{X}\) is said to dominate \(x^2 \in \mathcal{X}\) if
	\begin{itemize}
		\item Both points are feasible and \(x^1 \in \Omega\) dominates \(x^2 \in \Omega\), denoted as \(x^1 \prec_f x^2\).
		\item Both points are infeasible and \(f_i(x^1) \leq f_i(x^2)\) for \(i = 1,2, \ldots, m\) and \(h(x^1) \leq h(x^2)\) with at least one strict inequality, denoted as \(x^1 \prec_h x^2\).
	\end{itemize}
	\end{definition}

	This extension of the dominance relation is different from the definition proposed in~\cite{Regis2016}. Indeed, in this work, feasible and infeasible points are never compared, and the dominance relation takes into account both objective function values and the constraint violation function values. Another extension of dominance to constrained optimization appears in~\cite{FliVaz2016}, but as in the previous case, it allows the comparison of feasible and infeasible points. Note that if \(m = 1\), the dominance relation reduces into the dominance relation of MADS-PB~\cite{AuDe09a}.
	
	With this dominance relation, one can define the set of infeasible nondominated points.

	\begin{definition}
		At iteration \(k\), the \textit{set of infeasible nondominated points} is defined as
		\[U^k = \left\{x \in V^k \setminus \Omega : \text{ there is no } y \text{ such that } y \prec_h x  \right\}.\]
	\end{definition}

	As for the MADS-PB algorithm, DMulti-MADS-PB relies on a nonnegative barrier threshold \(h^k_{\max}\), set at each iteration \(k\), to construct the set of infeasible incumbent solutions.
	
	\begin{definition}
		At iteration \(k\), the \textit{set of infeasible incumbent solutions} is
		\[I^k = \left\{\arg \min_{x \in U^k} \left\{f(x) : 0 < h(x) \leq h^k_{\max} \right\}\right\}.\]
	\end{definition}
	
	All evaluated points having a value of \(h\) above \(h^k_{\max}\) are automatically rejected by the algorithm. Furthermore, the barrier threshold is nonincreasing with the iteration number \(k\). Its value at each iteration is detailed in Section~\ref{subsect:update_end_iteration}.
		
	Figure~\ref{fig:feasible_and_infeasible_incumbents_example} illustrates these definitions. Note that \(I^k\) is not a singleton. The images of fourteen trial points generated at the beginning of iteration \(k\), i.e. \(V^k\), in the ``augmented'' objective space (a triobjective space with two objectives \(f_1\), \(f_2\) and the constraint violation function \(h\)) for a biobjective minimization optimization problem are represented. The set of feasible incumbent solutions, indicated by black bullets, contains four elements. Two other generated points are equally visible, but each of them is dominated by a feasible incumbent solution. These six generated trial points belong to the biobjective space. The set of infeasible non-dominated points contains six elements, identified by black lozenges and diamonds. Among them, only three qualify to be infeasible incumbent solutions. Indeed, one element among the others is above the threshold value \(h^k_{\max}\). The two other ones are dominated by at least one solution of \(I^k\) in terms of \(f\) objective values. Two elements of \(U^k\) dominate the two last remaining trial points, marked by \(\times\) symbols. Notice that all elements among \(I^k\) and \(F^k\) could have been generated before iteration \(k-1\), by definition of \(V^k\).
	
	\begin{figure}[!th]
		\centering
		\tdplotsetmaincoords{45}{135}
		\begin{tikzpicture}
			
			\begin{scope}[xshift=-5.5cm, yshift=3.2cm, tdplot_main_coords]
				\draw [->] (0,0,0) -- (5,0,0) node[left]{\small{\(f_1\)}};
				\draw [->] (0,0,0) -- (0,5,0) node[right]{\small{\(f_2\)}};
				\draw [->] (0,0,0) -- (0,0,4.5) node[above right]{\small{\(h\)}};
				
				\fill [pattern=dots, pattern color=gray!70] (0,0,4) -- (5,0,4) -- (5, 5, 4) --  (0,5,4) --cycle;
				\draw (-0.3,2.5, 4) node[right]{\small{\(h^k_{\max}\)}};
				
				\draw (0.5, 4, 0) node{\tiny{\(\bullet\)}};
				\draw (1.5, 3, 0) node{\tiny{\(\bullet\)}};
				\draw (2, 1, 0) node{\tiny{\(\bullet\)}};
				\draw (3, 0.5, 0) node{\tiny{\(\bullet\)}};
				
				\draw (1.6, 3.8, 0) node{\(\circ\)};
				\draw (2.5, 1.3, 0) node{\(\circ\)};
				
				\draw[thick,dotted] (0.2, 4.5, 0) -- (0.2, 4.5, 3) node{\tiny{\(\blacklozenge\)}};
				\draw[thick,dotted] (1, 3.1, 0) -- (1, 3.1, 3.5) node{\tiny{\(\blacklozenge\)}};
				\draw[thick,dotted] (2.5, 0.75, 0) -- (2.5, 0.75, 2) node{\tiny{\(\blacklozenge\)}};
				
				\draw[thick,dotted] (1, 0.5, 0) -- (1, 0.5, 6) node{\(\diamond\)};
				\draw[thick,dotted] (1.5, 4.8, 0) -- (1.5, 4.8, 1.5) node{\(\diamond\)};
				\draw[thick,dotted] (4, 0.85, 0) -- (4, 0.85, 1) node{\(\diamond\)};
				
				\draw[thick,dotted] (4.5, 1.7, 0) -- (4.5, 1.7, 4) node{\(\times\)};
				\draw[thick,dotted] (3.7, 4.1, 0) -- (3.7, 4.1, 4.5) node{\(\times\)};
				
			\end{scope}
		
			
			\draw [->] (0,0) -- (5.5,0) node[below right]{\(f_1\)};
			\draw [->] (0,0) -- (0,6) node[above left]{\(f_2\)};
			
			\draw (0.5, 4) node{\tiny{\(\bullet\)}};
			\draw (1.5, 3) node{\tiny{\(\bullet\)}};
			\draw (2, 1) node{\tiny{\(\bullet\)}};
			\draw (3, 0.5) node{\tiny{\(\bullet\)}};
			
			\draw (1.6, 3.8) node{\(\circ\)};
			\draw (2.5, 1.3) node{\(\circ\)};
			
			\draw (0.2, 4.5) node{\tiny{\(\blacklozenge\)}};
			\draw (1, 3.1) node{\tiny{\(\blacklozenge\)}};
			\draw (2.5, 0.75) node{\tiny{\(\blacklozenge\)}};
			
			\draw (1, 0.5) node{\(\diamond\)};
			\draw (1.5, 4.8) node{\(\diamond\)};
			\draw[thick,dotted] (4, 0.85) node{\(\diamond\)};
			
			\draw (4.5, 1.7) node{\(\times\)};
			\draw (3.7, 4.1) node{\(\times\)};
			
			\draw (-10, -1.5) node{\tiny{\(\bullet\)}} node[right]{\small{Set of feasible non dominated points \(F^k\)}};
			\draw (-10, -2) node{\tiny{\(\blacklozenge\)}} node[right]{\small{Set of infeasible incumbent solutions \(I^k\)}};
			\draw (-10, -2.5) node{\(\diamond\)} node[right]{\small{Subset of the set of infeasible non-dominated points \(U^k \setminus I^k\)}};
			\draw (0, -1.5) node{\(\circ\)} node[right]{\small{Feasible dominated points}};
			\draw (0, -2) node{\(\times\)} node[right]{\small{Infeasible dominated points}};
			
		\end{tikzpicture}
		\caption{An example of feasible and infeasible incumbent solutions at iteration \(k\) for a biobjective minimization problem in the ``augmented'' objective space (a triobjective space with the two objectives \(f_1\), \(f_2\) and the constraint violation function \(h\)). On the left, a 3D view; on the right, the projection on the biobjective space.}
		\label{fig:feasible_and_infeasible_incumbents_example}
	\end{figure}
	
	From the sets \(F^k\) and \(I^k\), DMulti-MADS constructs two lists of incumbent solutions, the iterate list of best feasible incumbents found until iteration \(k\),
	\[L^k_F = \left\{ (x^l, \Delta^l) : x^l \in F^k \text{ and } \Delta^l > 0, \ l = 1,2, \ldots, |L^k_F| \right\}\]
	and the \textit{iterate list of best infeasible incumbents} found until iteration \(k\)
	\[L^k_I = \left\{ (x^l, \Delta^l) : x^l \in I^k \text{ and } \Delta^l > 0, \ l = 1,2, \ldots, |L^k_I| \right\}.\]
	Each element of both lists possesses its own associated frame size parameter \(\Delta^l\).
	
	\subsubsection{An iteration of the DMulti-MADS-PB algorithm}\label{subsect:iteration_dmulti-mads-pb}
	
	As for the single-objective optimization MADS-PB algorithm~\cite{AuDe09a}, the search and the poll which constitute the two steps of an iteration for DMulti-MADS-PB are organized around two iterate incumbents at iteration \(k\): a feasible one \((x^k_F, \Delta^k_F) \in L^k_F\) and an infeasible one \((x^k_I, \Delta^k_I) \in L^k_I\). However, as the frame size parameters associated to the feasible incumbent \(x^k_F\) and infeasible incumbent \(x^k_I\) can be distinct, it is necessary to adapt the definition of the mesh \(M^k\). At iteration \(k\), \(M^k\) is defined as
	\[M^k = \begin{cases}
	 \displaystyle\bigcup_{x \in V^k} \{x + \delta^k_F D z : z \in \mathbb{N}^{n_D}\} & \text{if } L^k_F \neq \emptyset; \\
	 \displaystyle\bigcup_{x \in V^k} \{x + \delta^k_I D z : z \in \mathbb{N}^{n_D}\} & \text{otherwise,}
	 \end{cases}\]
 	where \(\delta^k_F > 0\) and \(\delta^k_I > 0\) are respectively the mesh size parameters associated to the feasible and infeasible incumbents \(x^k_F\) and \(x^k_I\) defined as \(\delta^k_F = \min \left\{ \Delta^k_F, (\Delta^k_F)^2 \right\}\) and \(\delta^k_I = \min \left\{ \Delta^k_I, (\Delta^k_I)^2 \right\}\). In other terms, the configuration of the mesh \(M^k\) at iteration \(k\) is primarily based on the selection of the feasible frame center if this last one exists. 
 	
 	It is then possible to adapt the definition of the poll set \(P^k\). At iteration \(k\), \(P^k\) is defined as
 	\[P^k = \begin{cases}
 		P^k(x^k_F, \Delta^k_F) & \text{ for some } (x^k_F, \Delta^k_F) \in L^k_F \text{ if } L^k_I = \emptyset, \\
 		P^k(x^k_I, \Delta^k_I) & \text{ for some } (x^k_I, \Delta^k_I) \in L^k_I \text{ if } L^k_F = \emptyset, \\
 		P^k(x^k_F, \Delta^k_F) \cup P^k(x^k_I, \Delta^k_F) & \text{ for some } (x^k_F, \Delta^k_F) \in L^k_F \text{ and } (x^k_I, \Delta^k_I) \in L^k_I, \text{ otherwise},
 	\end{cases}\]
    where \(P^k(x, \Delta^k) = \{x + \delta^k d : d \in \mathbb{D}^k_\Delta\} \subset M^k\) represents the poll set centered at \(x\) at iteration \(k\) with \(\delta^k = \min \left\{ \Delta^k, (\Delta^k)^2 \right\}\). 
    
    Figure~\ref{fig:poll_set_example} illustrates a construction of the poll set \(P^k\) when both the feasible frame center \(x^k_F\) and the infeasible frame center \(x^k_I\) exist. Here, \(\Omega \subset \mathbb{R}^2\). All poll candidates belong to one of the frames generated by \(x^k_F\) or \(x^k_I\) of size \(\Delta^k_F > 0\) (this is not mandatory as long as the definition of the poll holds). The set \(P^k\) is the union of the sets \(P^k(x^k_F, \Delta^k_F) = \left\{p^1, p^2, p^3, p^4\right\}\) and \(P^k(x^k_I, \Delta^k_I) = \left\{p^5, p^6\right\}\).     Section~\ref{subsect:frame_center_selection} gives more implementation details on the construction of the poll set.
    
    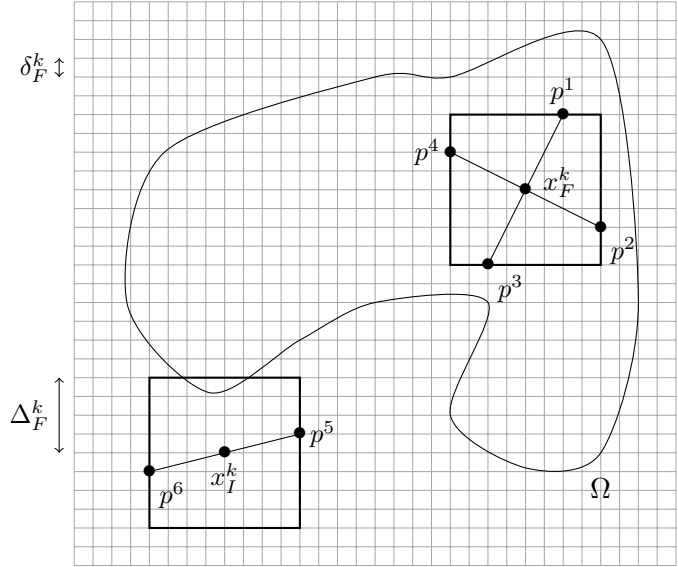
\begin{figure}[!ht]
    	\centering
    	\begin{tikzpicture}
    		
    		\draw[very thin, gray!70] (0, -0.5) grid[step=0.25] (8,7);
    		
    		\draw plot [smooth cycle] coordinates{(1.8,1.8) (0.7, 3) (1.2, 5) (4, 6) (5, 6) (7, 6.5) (7.5, 3) (7, 1) (6, 0.8) (5, 1.5) (5.5, 3) (4, 3) (3, 2.5)};
    		
    		\draw (6,4.5) node{\(\bullet\)};
    		\draw (6.1, 4.6) node[right]{\small{\(x^k_F\)}};
    		\draw[thick] (5, 3.5) rectangle(7, 5.5);
    		
    		\draw (6, 4.5) -- (6.5, 5.5) node{\(\bullet\)} node[above]{\small{\(p^1\)}};
    		\draw (6, 4.5) -- (7, 4) node{\(\bullet\)} node[below right]{\small{\(p^2\)}};
    		\draw (6, 4.5) -- (5.5, 3.5) node{\(\bullet\)} node[below right]{\small{\(p^3\)}};
    		\draw (6, 4.5) -- (5, 5) node{\(\bullet\)} node[left]{\small{\(p^4\)}};
    		
    		\draw (2, 1) node{\(\bullet\)} node[below]{\small{\(x^k_I\)}};
    		\draw[thick] (1, 0) rectangle (3, 2);
    		
    		\draw (2, 1) -- (3, 1.25) node{\(\bullet\)} node[right]{\small{\(p^5\)}};
    		\draw (2, 1) -- (1, 0.75) node{\(\bullet\)} node[below right]{\small{\(p^6\)}};
    		
    		\draw [<->] (-0.2, 1) -- (-0.2, 2) node[midway, left]{\small{\(\Delta^k_F\)}};
    		\draw [<->] (-0.2, 6) -- (-0.2, 6.25) node[midway, left]{\small{\(\delta^k_F\)}};
    		\draw (7,0.8) node[below]{\(\Omega\)};
    		
    	\end{tikzpicture}
    	 \caption{Example of a poll set \(P^k = \left\{p^1, p^2, p^3, p^4, p^5, p^6\right\}\) for \(\Omega \subset \mathbb{R}^2\) when both \(x^k_F\) and \(x^k_I\) exist (inspired by~\cite{AuDe09a}).}
    	 \label{fig:poll_set_example}
    \end{figure}
        
    It remains to address the choice of the feasible and infeasible frame centers at iteration \(k\). In the case of the MADS-PB algorithm, the set of feasible and infeasible incumbent solutions are often singletons (or composed of points which have the same objective function value and the same \(h\)-constrained value). Their selection is then unambiguous.
    
    When \(L^k_F\) possesses at least one element, the choice of the current feasible frame center must satisfy the same condition as described 
    in Section~\ref{sect:Presentation_DMulti_MADS}. Practically, to get a good Pareto front approximation, it is also recommended to take into consideration the gap between the different non-dominated solutions found until iteration \(k\), as it is done in~\cite{BiLedSa2020}.
    
    If \(L^k_F\) is empty, the infeasible frame center must satisfy
    \[(x^k_I, \Delta^k_I) \in \left\{(x, \Delta) \in L^k_I : \Delta^k_{h_{\min}} \leq \Delta \right\}\]
    where \(\Delta^k_{h_{\min}}\) is defined as
    \[(x^k_{h_{\min}}, \Delta^k_{h_{\min}}) \in \arg \min_{(x, \Delta) \in L^k_I} h(x).\]
    The idea behind this selection criterion is to prioritize exploration along the least infeasible point with the best objective values hoping to find a ``good'' feasible point. At the same time, this selection criterion allows to explore some potentially interesting regions of the objective space. Intuitively, the infeasible current best incumbents associated with a frame size parameter value superior to the least current infeasible point are the ones which have not yet been explored or are promising, due to the update procedure, detailed in Section~\ref{subsect:update_end_iteration}. Several infeasible incumbents can satisfy this criterion. Then, following the selection procedure described in~\cite{BiLedSa2020}, this work proposes Algorithm~\ref{alg:select_point_strategy_infeasible_nofeasible} to take into account the density of the set of best infeasible incumbents in the objective space.
    
    \begin{figure}[!th]
    	\begin{algorithm}[H]\small
    		\caption{\texttt{selectCurrentInfeasibleIncumbent}\((L^k_I)\)}
    		\begin{algorithmic}
    			\STATE Let \(L^{select}_I := \left\{(x, \Delta) \in L^k_I: \Delta^k_{h_{\min}} \leq \Delta\right\}\) with \(\Delta^k_{h_{\min}} = \arg \displaystyle\min_{(x, \Delta) \in L^k_I} h(x)\).
    			\IF{\(|L^{select}_I| = 1 \)}
    			\RETURN \((x, \Delta)\) with \(L^{select}_I = \left\{(x, \Delta)\right\}\).
    			\ELSIF{\(|L^{select}_I| = 2\) and  \(|L^k_I| = 2\) }
    			\STATE Let \(l_0 \in \underset{l = 1,2}{\arg\max} \ \max_{i = 1, 2, \ldots, m} f_i (x^l)  \).
    			\RETURN \((x^{l_0}, \Delta^{l_0}) \).
    			\ELSE
    			\FOR{\(i = 1,2, \ldots, m\)}
    				\STATE Order \(L^k_I = \left\{(x^1, \Delta^1), (x^2, \Delta^2), \ldots, (x^{|L^k_I|}, \Delta^{|L^k_I|}) \right\}\) such that \\ \(f_i(x^1) \leq f_i(x^2) \leq \ldots \leq f_i(x^{|L^k|})\).
    				\FOR{\(l = 1,2, \ldots, |L^k_I|\)}
    					\STATE Compute \(\gamma_i(x^l)\) defined as
    					\[\gamma_i(x^l) = \begin{cases}
    						2 \dfrac{f_i(x^2) - f_i(x^1)}{f_i(x^{|L^k_I|}) - f_i(x^1)} & \text{if } l = 1, \\
    						2 \dfrac{f_i(x^{|L^k_I|}) - f_i(x^{|L^k_I| - 1})}{f_i(x^{|L^k_I|}) - f_i(x^1)} & \text{ if } l = |L^k_I|,\\
    						\dfrac{f_i(x^{l+1}) - f_i(x^{l-1})}{f_i(x^{|L^k_I|}) - f_i(x^1)} & \text{otherwise}.
    					\end{cases}\]
    				\ENDFOR
    			\ENDFOR 
    			\STATE Let \(l_0 \in \underset{l = 1, 2, \ldots , |L^{select}_I|}{\arg\max} \max_{i = 1, 2, \ldots, m} \gamma_i (x^l) \).
    			\RETURN \((x^{l_0}, \Delta^{l_0}) \).
    			\ENDIF
    		\end{algorithmic}
    		\label{alg:select_point_strategy_infeasible_nofeasible}
    	\end{algorithm}
    	\caption{A procedure to select the current incumbent at iteration \(k\) taking into account the spacing between elements of the iterate list of best infeasible incumbents \(L^k_I\) in the objective space, inspired by~\cite{BiLedSa2020}.}
    \end{figure}
    
    There remains the case where \(L^k_F\) and \(L^k_I\) are both non-empty. A first approach would be to independently select the feasible and infeasible frame centers, based for example on a spacing criterion to densify the set of best feasible and best infeasible current solutions. However, this strategy does not exploit the ``dominance'' order which exists between both sets. More precisely, one could hope that exploring around a carefully chosen infeasible incumbent leads to the generation of a new feasible point which significantly improves the set of current feasible solutions. The proposed approach is inspired by the works of~\cite{Li2017}.

    At iteration \(k\), considering the non-empty iterate list of feasible incumbents \(L^k_F\), this work introduces the function \(\psi_{L^k_F} : \mathcal{X} \rightarrow \mathbb{R}\) given as
    \begin{align*} 
    \psi_{L^k_F}(x) & = \Phi_{L^k_F}(f(x)) \\
    & =
     \begin{cases}
    	\displaystyle\min_{(x^F, \Delta) \in L^k_F} \sum_{i = 1}^m \left[f_i(x^F) - \min \left\{f_i(x), f_i(x^F)\right\}\right] & \text{if there is no } (x^F, \Delta) \in L^k_F \text{ such that } \\ 
    	& f_i(x^F) \leq f_i(x) \text{ for } i = 1,2,\ldots,m; \\
    	-\displaystyle\min_{(x^F, \Delta) \in L^k_F} \sum_{i = 1}^m \left[f_i(x) - \min \left\{f_i(x), f_i(x^F)\right\}\right] & \text{otherwise}.
    \end{cases}
 	\end{align*}
 	The level sets of \(\Phi_{L^k_F}\) are depicted in Figure~\ref{fig:level_sets_phi_fct}. Note that all potential feasible decision vectors which are not dominated by a current feasible incumbent solution of \(L^k_F\) are given a positive \(\psi_{L^k_F}\) value. All dominated feasible decision vectors correspond to a negative \(\psi_{L^k_F}\) value.
 	
 		\begin{figure}[!th]
 		\centering
 		\begin{tikzpicture}
 			\draw [->] (0,0) -- (7, 0);
 			\draw [->] (0,0) -- (0, 5);
 			\draw (7, 0) node[below right] {{\small $f_1$}};
 			\draw (0, 5) node[above left] {{\small $f_2$}};
 			
 			\draw (1, 4) node{{\tiny \(\bullet\)}};
 			\draw (2, 3) node{{\tiny \(\bullet\)}};
 			\draw (3.5, 2.5) node{{\tiny \(\bullet\)}};
 			\draw (4.5, 2) node{{\tiny \(\bullet\)}};
 			\draw (5, 0.7) node{{\tiny \(\bullet\)}};
 			
 			\draw (1, 5) -- (1, 4) -- (2, 4) -- (2, 3) -- (3.5, 3) -- (3.5,2.5) -- (4.5, 2.5) -- (4.5, 2) -- (5, 2) -- (5, -0.2);
 			
 			\draw (1.5, 1.5) node{\small{\(\Phi_{L^k_F} > 0\)}};
 			\draw (5.5, 4) node{\small{\(\Phi_{L^k_F} < 0\)}};
 			\draw (2,3) -- (3.5, 3)  node[midway, below]{\small{\(\Phi_{L^k_F} = 0\)}};
 			
 			\draw (8, 3.5) node{{\tiny \(\bullet\)}} node[right] {\small{Points in \(L^k_F\)}};
 			\draw (7.8, 3.2) rectangle (10.3, 3.8);
 			
 		\end{tikzpicture}
 		\caption{Level sets in the objective space of \(\Phi_{L^k_F}\) for a biobjective minimization problem.}
 		\label{fig:level_sets_phi_fct}	
 	\end{figure}
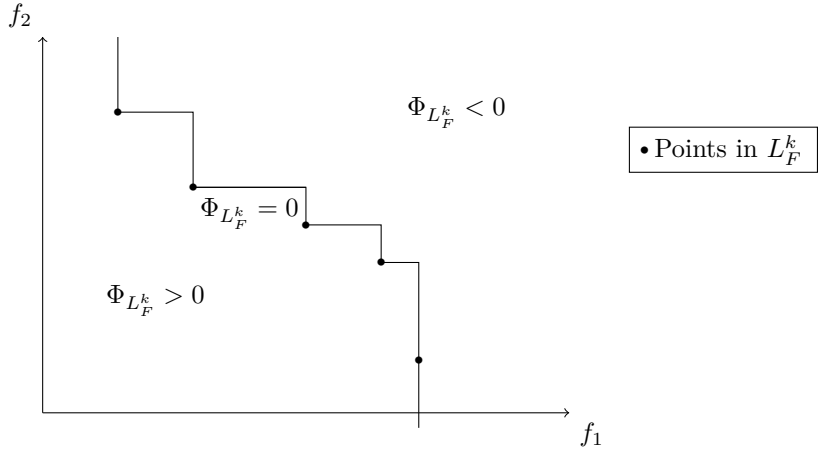
 
 	The current infeasible frame center is then chosen as the element of \(L^k_I\) which maximizes the \(\psi_{L^k_F}\) function, i.e.
 	\[(x^k_I, \Delta^k_I) \in \arg \max_{(x, \Delta) \in L^k_I} \psi_{L^k_F}(x).\]
 	Intuitively, exploring around an infeasible frame center with a large positive value can lead to the generation of a feasible point which significantly improves the current Pareto front approximation. If the selected infeasible frame center possesses a negative value, it is ``close'' to the non-dominated zone relative to the current Pareto front approximation. An exploration around it can still improve the current feasible set.

    \subsubsection{Update of the mesh parameter at the end of an iteration}\label{subsect:update_end_iteration}
    
    At the end of the search and the poll at iteration \(k\), DMulti-MADS-PB has evaluated a finite number of candidates on the mesh \(M^k\). The cache \(V^{k+1}\) is then the union of the cache \(V^k\) at the beginning of iteration \(k\) and all the candidates evaluated during iteration \(k\). As for MADS-PB~\cite{AuDe09a}, 
    the values of \(f\) and \(h\) stored in \(V^{k+1}\) for DMulti-MADS-PB determine the way the threshold value \(h^{k+1}_{\max}\) (see~\eqref{eq:hmax_formula}) and the mesh and frame size parameters of the elements of iterate lists of feasible and infeasible incumbents \(L^{k+1}_F\) and \(L^{k+1}_I\) are updated.
    
    Similarly to MADS-PB~\cite{AuDe09a}, this work uses the concept of \textit{dominating}, \textit{improving} and \textit{unsuccessful} iteration. A dominating iteration occurs when DMulti-MADS-PB generates a trial point which dominates a current frame incumbent.  An improving iteration is not dominating but improves the feasibility of the infeasible frame center. Otherwise, the iteration is unsuccessful. More precisely,
    \begin{itemize}
    	\item Iteration \(k\) is said to be dominating whenever a trial point \(x^t \in V^{k+1}\) dominates one frame incumbent, i.e.
    	\[h(x^t) = 0 \text{ and } x^t \prec_f x^k_F \text{ or } h(x^t) > 0 \text{ and } x^t \prec_h x^k_I\]
    	is found.
    	\item Iteration \(k\) is said to be improving if it is not dominating, but generates a trial point \(x^t \in V^{k+1}\) which satisfies
    	\[0 < h(x^t) < h(x^k_I) \text{ and there exists } i_0 \in \{1,2,\ldots,m\} \text{ such that } f_{i_0}(x^k_I) < f_{i_0}(x^t).\]
    	\item Iterations which are neither dominating nor improving are labelled as unsuccessful. It happens when every trial point \(x^t \in V^{k+1}\) is such that
    	\[h(x^t) = 0 \text{ and } x^t \nprec_f x^k_F, \text{ or } h(x^t) = h(x^k_I) \text{ and } x^t \nprec_h x^k_I \text{ or } h(x^t) > h(x^k_I).\] 	
    \end{itemize}
    These three cases are described in Figure~\ref{fig:iteration_cases_DMultiMADS_PB}.
    
    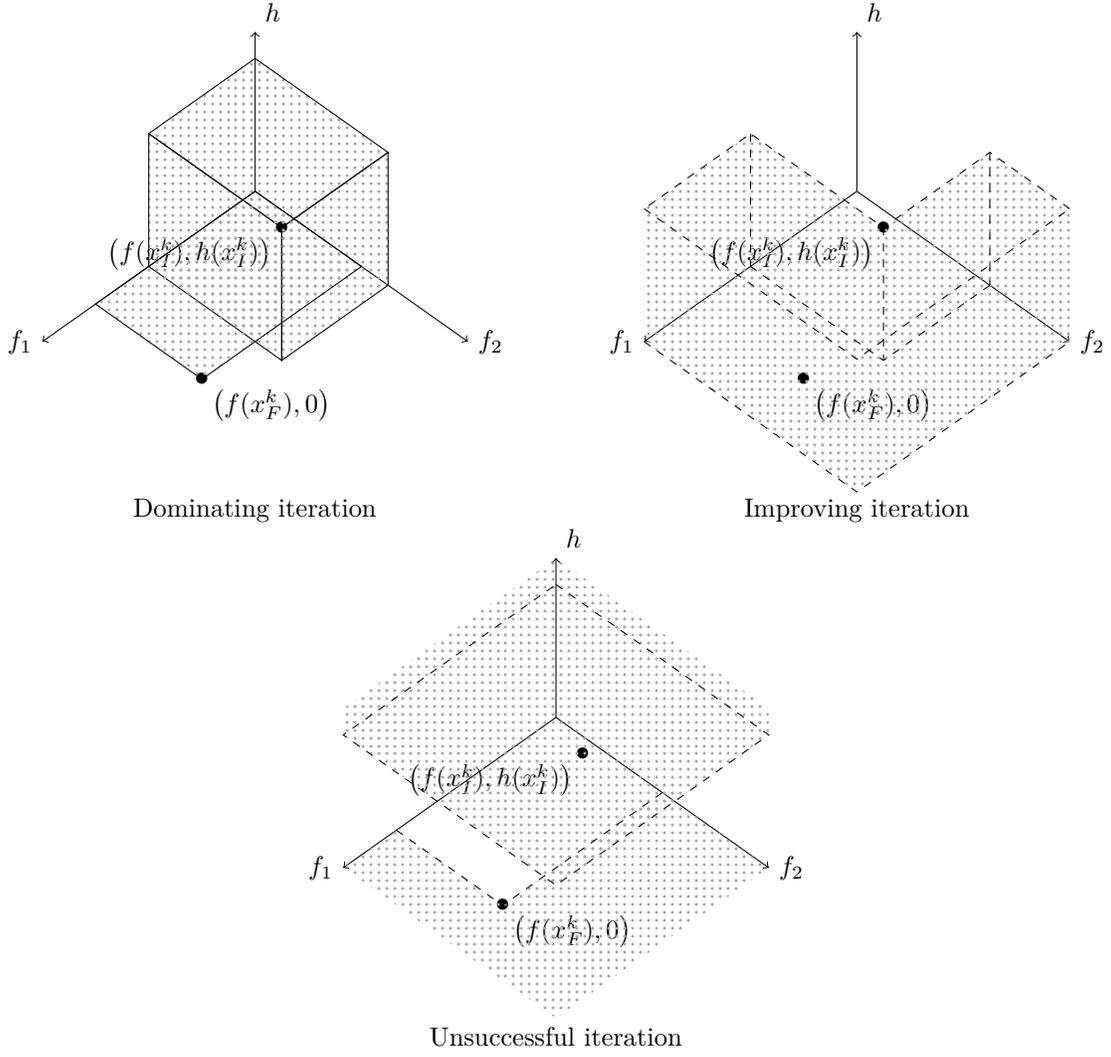
\begin{figure}[!th]
    	\centering
    	\tdplotsetmaincoords{45}{135}
    	\begin{tikzpicture}
    		\begin{scope}[xshift=-8cm, tdplot_main_coords]
    			\draw [->] (0,0,0) -- (4,0,0) node[left]{\small{\(f_1\)}};
    			\draw [->] (0,0,0) -- (0,4,0) node[right]{\small{\(f_2\)}};
    			\draw [->] (0,0,0) -- (0,0,3) node[above right]{\small{\(h\)}};
    			
    			\draw (2,2.5, 2.5) node{\(\bullet\)} node[below left]{\small{\(\left(f(x^k_I), h(x^k_I)\right)\)}};
    			\filldraw [pattern=dots, pattern color=gray!70] (2,2.5, 0) -- (2,2.5,2.5) -- (2, 0,2.5) -- (2,0,0) -- cycle;
    			\filldraw [pattern=dots, pattern color=gray!70] (2,2.5,2.5) -- (0, 2.5,2.5) -- (0,2.5,0) -- (2,2.5,0) -- cycle;
    			\filldraw [pattern=dots, pattern color=gray!70] (2, 2.5, 2.5) -- (0,2.5,2.5) -- (0,0,2.5) -- (2, 0, 2.5) -- cycle;
    			
    			\draw (3, 2, 0) node{\(\bullet\)} node[below right]{\small{\(\left(f(x^k_F), 0\right)\)}};
    			\filldraw[pattern=dots, pattern color=gray!70] (3,2,0) -- (0,2,0) -- (0,0,0) -- (3,0,0) -- cycle;
    			
    			\draw (0,0,-6) node{\small{Dominating iteration}};
    			
    		\end{scope}
    		
    		\begin{scope}[tdplot_main_coords]
    			\draw [->] (0,0,0) -- (4,0,0) node[left]{\small{\(f_1\)}};
    			\draw [->] (0,0,0) -- (0,4,0) node[right]{\small{\(f_2\)}};
    			\draw [->] (0,0,0) -- (0,0,3) node[above right]{\small{\(h\)}};
    			
    			\draw (2,2.5, 2.5) node{\(\bullet\)} node[below left]{\small{\(\left(f(x^k_I), h(x^k_I)\right)\)}};
    			
    			\draw (3, 2, 0) node{\(\bullet\)} node[below right]{\small{\(\left(f(x^k_F), 0\right)\)}};
    			
    			
    			\fill [pattern=dots, pattern color=gray!70] (0, 2.5, 2.5) -- (4,2.5, 2.5) -- (4, 4, 2.5) -- (0,4, 2.5) -- cycle;
    			\fill [pattern=dots, pattern color=gray!70] (4, 4, 2.5) -- (0,4, 2.5) -- (0,4,0) -- (4,4,0) -- cycle;
    			\fill [pattern=dots, pattern color=gray!70] (4, 2.5, 2.5) -- (4,4, 2.5) -- (4,4,0) -- (4, 2.5, 0) -- cycle;
    			
    			\fill [pattern=dots, pattern color=gray!70] (2, 0, 2.5) -- (2,2.5, 2.5) -- (4,2.5,2.5) -- (4, 0, 2.5) -- cycle;
    			\fill [pattern=dots, pattern color=gray!70] (4,2.5,2.5) -- (4, 0, 2.5) -- (4,0,0) -- (4,2.5, 0) -- cycle;
    			
    			\draw[dashed] (0,4,2.5) -- (0, 2.5, 2.5) -- (2, 2.5, 2.5) -- (2,0,2.5) -- (4,0,2.5);
    			\draw[dashed] (0, 2.5, 0) -- (2, 2.5, 0) -- (2,0,0);
    			\draw[dashed] (4,0,0) -- (4,4,0) -- (0,4,0);
    			\draw[dashed] (4,0,2.5) -- (4,4,2.5) -- (0,4,2.5);
    			
    			\draw[dashed] (0, 2.5, 2.5) -- (0,2.5, 0);
    			\draw[dashed] (2,0,2.5) -- (2,0,0);
    			\draw[dashed] (2,2.5,2.5) -- (2,2.5, 0);
    			
    			\draw (0,0,-6) node{\small{Improving iteration}};
    			
    		\end{scope}
    	
    	\begin{scope}[xshift=-4cm,yshift=-7cm,tdplot_main_coords]
    		\draw [->] (0,0,0) -- (4,0,0) node[left]{\small{\(f_1\)}};
    		\draw [->] (0,0,0) -- (0,4,0) node[right]{\small{\(f_2\)}};
    		\draw [->] (0,0,0) -- (0,0,3) node[above right]{\small{\(h\)}};
    		
    		\draw (2,2.5, 2.5) node{\(\bullet\)} node[below left]{\small{\(\left(f(x^k_I), h(x^k_I)\right)\)}};

			\draw (3, 2, 0) node{\(\bullet\)} node[below right]{\small{\(\left(f(x^k_F), 0\right)\)}};
    		
    		
    		\fill [pattern=dots, pattern color=gray!70] (0,0,3) -- (0,4,3) -- (4,4,3) -- (4,0,3) --cycle;
    		\fill [pattern=dots, pattern color=gray!70] (0,4,3) -- (0,4,2.5) -- (4,4,2.5) -- (4,4,3) --cycle;
    		\fill [pattern=dots, pattern color=gray!70] (4,0,3) -- (4,0,2.5) -- (4,4,2.5) -- (4,4,3) --cycle;
    		
    		\fill [pattern=dots, pattern color=gray!70] (4,0,0) -- (3,0,0) -- (3,2,0) -- (0,2,0) -- (0,4,0) -- (4,4,0) -- cycle;
    		
    		\draw[dashed] (0,0,2.5) -- (0,4,2.5) -- (4,4,2.5) -- (4,0,2.5) --cycle;
    		\draw[dashed] (3,0,0) -- (3,2,0) -- (0,2,0);
    		
    		\draw (0,0,-6) node{\small{Unsuccessful iteration}};
    	
    	\end{scope}
    	\end{tikzpicture}
    	\caption{Iterations cases for DMulti-MADS-PB.}
    	\label{fig:iteration_cases_DMultiMADS_PB}
    \end{figure}

    All points generated during iteration \(k\) are given a frame size parameter \(\Delta \geq \Delta^k\) where \(\Delta^k\) is the frame size parameter associated to \(M^k\). More precisely, for any trial element \((x^t, \Delta)\) generated during iteration \(k\),
    \[(x^t, \Delta) = \begin{cases}
    	(x^t, \tau^{-1} \Delta^k) & \text{ if } h(x^t) = 0 \text{ and there exists at least } x \in F^k \text{ such that } x \prec_f x^t, \text{ or } \\
    	(x^t, \tau^{-1} \Delta^k) & \text{ if } h(x^t) = 0 \text{ and for } i = 1,2, \ldots, m,  f_i(x^t) \leq \min_{x \in F^k} f_i(x) \text{ with at least an }  \\
    	& \text{ index } i_0 \in \{1,2, \ldots, m\} \text{ such that } f_{i_0}(x^t) < \min_{x \in F^k} f_{i_0}(x), \text{ or } \\
    	(x^t, \tau^{-1} \Delta^k)  & \text{ if } h(x^t) > 0 \text{ and there exists at least } x \in I^k \text{ such that } x^t \prec_h x, \text{ or} \\
    	(x^t, \tau^{-1} \Delta^k) & \text{ if } 0 < h(x^t) \leq \max_{x \in I^k} h(x) \text{ and  for } i = 1,2,\ldots, m, f_i(x^t) \leq \min_{x \in I^k} f_i(x)\\
    	& \text{ with at least one index } i_0 \in \{1,2, \ldots, m\} \text{ such that } f_{i_0}(x^t) < \min_{x \in I^k} f_{i_0}(x),\\
    	
    	(x^t, \Delta^k) & \text{ otherwise}; \\
    \end{cases}\]
	where \(\tau \in (0,1) \cap \mathbb{Q}\) is the frame size adjustment parameter chosen by the user. Thus, all candidates which dominate one of the points in \(L^k_F\) or \(L^k_I\) or improve the extent of the objectives values covered by at least one of the iterate list have their associated frame size parameter increased. When \(L^k_F\) is empty and no feasible point has been generated at iteration \(k\), these candidates are likely to be potential frame center candidates at iteration \(k+1\). If \(L^k_F\) is not empty, the update of the frame size parameter associated to a new feasible generated point is similar to the one proposed in the original DMulti-MADS-EB algorithm~\cite{BiLedSa2020}. If the iteration is labelled as unsuccessful, no generated point at the end of iteration \(k\) dominates at least one of the frame center incumbents. In this case, DMulti-MADS-PB replaces \((x^k_{center}, \Delta^k_{center})\) by \((x^k_{center}, \tau \Delta^k)\) with \(\tau \in (0, 1) \cap \mathbb{Q}\) and \(x^k_{center} \in \{\{x^k_F\}, \{x^k_I\}, \{x^k_F, x^k_I\}\}\) relatively to the emptiness of the iterate lists \(L^k_F\) or \(L^k_I\). If the iteration is improving, the frame size parameters associated to the existing frame center incumbents keep the same value as in iteration \(k\).
	
	Figure~\ref{fig:zone of increasing mesh} illustrates the frame update rules for a biobjective minimization problem in the ``augmented'' objective space (a triobjective space with two objectives \(f_1\), \(f_2\) and the constraint violation function \(h\)). All candidates whose image is outside combined gray areas are affected a frame size parameter \(\Delta := \Delta^k\).
	
		\begin{figure}[!th]
		\centering
		\tdplotsetmaincoords{45}{135}
		\begin{tikzpicture}
			
			\begin{scope}[xshift=-5.5cm, yshift=3.2cm, tdplot_main_coords]
				\draw [->] (0,0,0) -- (5,0,0) node[left]{\small{\(f_1\)}};
				\draw [->] (0,0,0) -- (0,5,0) node[right]{\small{\(f_2\)}};
				\draw [->] (0,0,0) -- (0,0,4.5) node[above right]{\small{\(h\)}};
				
				\draw (0,0, 4) node{\(-\)} node[right]{\small{\(h^k_{\max}\)}};
				
				\draw (1, 4, 0) node{\tiny{\(\bullet\)}};
				\draw (2, 3, 0) node{\tiny{\(\bullet\)}};
				\draw (2.5, 1, 0) node{\tiny{\(\bullet\)}};
				\draw (3.5, 0.5, 0) node{\tiny{\(\bullet\)}};
				
				\draw (0.7, 4.5, 3) node{\tiny{\(\blacklozenge\)}};
				\draw (1.5, 3.1, 3.5) node{\tiny{\(\blacklozenge\)}};
				\draw (3, 0.75, 2) node{\tiny{\(\blacklozenge\)}};
				
				\filldraw [dotted,pattern=dots, pattern color=gray!50] (5,0,0) -- (5,0,3.5) -- (5,0.75,3.5) -- (5,0.75,0) --cycle;
				\filldraw [dotted,pattern=dots, pattern color=gray!50]  (5,0.75,3.5) -- (3,0.75,3.5) -- (3,0.75, 0) -- (5,0.75, 0) --cycle;
				\filldraw [dotted,pattern=dots, pattern color=gray!50] (5,0,3.5) -- (3,0,3.5) -- (3,0.75, 3.5) -- (5,0.75,3.5) --cycle;
				
				\filldraw [dotted, pattern=dots, pattern color=gray!50] (3, 0.75, 0) -- (1.5, 0.75, 0) -- (1.5, 0.75, 2) -- (3, 0.75, 2) -- cycle;
				\filldraw [dotted, pattern=dots, pattern color=gray!50] (1.5, 0.75, 2) -- (3, 0.75, 2) -- (3, 0, 2) -- (1.5, 0, 2) -- cycle;
				
				\filldraw [dotted,pattern=dots, pattern color=gray!50] (1.5, 3.1, 3.5) -- (1.5, 3.1, 0) -- (1.5, 0, 0) -- (1.5, 0, 3.5) --cycle;
				\filldraw [dotted,pattern=dots, pattern color=gray!50] (1.5, 3.1, 3.5) -- (0, 3.1, 3.5) -- (0,0,3.5) -- (1.5, 0, 3.5) --cycle;
				\filldraw [dotted,pattern=dots, pattern color=gray!50] (1.5, 3.1, 3.5) -- (0, 3.1, 3.5) -- (0,3.1, 0) -- (1.5, 3.1, 0) --cycle;
				
				\filldraw [dotted,pattern=dots, pattern color=gray!50] (0.7, 4.5, 3) -- (0.7, 3.1, 3) -- (0.7, 3.1, 0) -- (0.7, 4.5, 0) --cycle;
				\filldraw [dotted,pattern=dots, pattern color=gray!50] (0.7, 4.5, 3) -- (0, 4.5, 3) -- (0, 3.1, 3) -- (0.7, 3.1, 3) --cycle;
				
				\filldraw [dotted,pattern=dots, pattern color=gray!50] (0.7, 4.5, 3.5) -- (0.7, 5, 3.5) -- (0.7, 5, 0) -- (0.7, 4.5, 0) --cycle;
				\filldraw [dotted,pattern=dots, pattern color=gray!50] (0.7, 4.5, 3.5) -- (0, 4.5, 3.5) -- (0, 5, 3.5) -- (0.7, 5, 3.5) --cycle;
				\filldraw [dotted,pattern=dots, pattern color=gray!50] (0.7, 5, 3.5) -- (0, 5, 3.5) -- (0, 5, 0) -- (0.7, 5,0) --cycle;
				
				\fill [pattern=north west lines, pattern color=gray!50] (1,5, 0) -- (1,4,0) -- (1, 3, 0) -- (2,3,0) -- (2, 1,0) -- (2.5, 1, 0) -- (2.5, 0.5, 0) -- (3.5, 0.5, 0) -- (5, 0.5, 0) -- (5, 0, 0) -- (0,0,0) -- (0,5,0) --cycle;
								
			\end{scope}
			
			
			\draw [->] (0,0) -- (5.5,0) node[below right]{\small{\(f_1\)}};
			\draw [->] (0,0) -- (0,6) node[above left]{\small{\(f_2\)}};
			
			\draw (1, 4) node{\tiny{\(\bullet\)}};
			\draw (2, 3) node{\tiny{\(\bullet\)}};
			\draw (2.5, 1) node{\tiny{\(\bullet\)}};
			\draw (3.5, 0.5) node{\tiny{\(\bullet\)}};
			
			\draw (0.7, 4.5) node{\tiny{\(\blacklozenge\)}};
			\draw (1.5, 3.1) node{\tiny{\(\blacklozenge\)}};
			\draw (3, 0.75) node{\tiny{\(\blacklozenge\)}};
			
			\fill [pattern=dots, pattern color=gray!50] (0.7, 6) -- (0.7, 4.5) -- (1.5, 4.5) -- (1.5, 3.1) -- (3, 3.1) -- (3, 0.75) -- (5.5, 0.75) -- (5.5, 0) -- (0,0) -- (0,6) --cycle;
			
			\fill [pattern=north west lines, pattern color=gray!50] (1, 6) -- (1,4) -- (2,4) -- (2,3) -- (2.5,3) -- (2.5,1) -- (3.5,1) -- (3.5, 0.5) -- (5.5, 0.5) -- (5.5, 0) -- (0,0) -- (0,6) --cycle;
			
			\draw (-10, -1.5) node{\tiny{\(\bullet\)}} node[right]{\small{Set of feasible non dominated points \(F^k\)}};
			\draw (-10, -2) node{\tiny{\(\blacklozenge\)}} node[right]{\small{Set of infeasible incumbent solutions \(I^k\)}};
			\fill [pattern=north west lines, pattern color=gray!50] (-2.8, -1.7) rectangle (-2, -1.15);
			\draw (-2, -1.5) node[right]{\small{Zone of increasing mesh for feasible candidates}};
			\fill [pattern=dots, pattern color=gray!50] (-2.8, -2.3) rectangle (-2, -1.75);
			\draw (-2, -2) node[right]{\small{Zone of increasing mesh for infeasible candidates}};
			
		\end{tikzpicture}
		\caption{An example of an increasing zone for frame size parameters at iteration \(k\) for a biobjective minimization problem. On the left, a 3D view of the ``augmented'' objective space (a triobjective space with the two objectives \(f_1\), \(f_2\) and the constraint violation function \(h\)); on the right, projection on the biobjective space.}
		\label{fig:zone of increasing mesh}
	\end{figure}
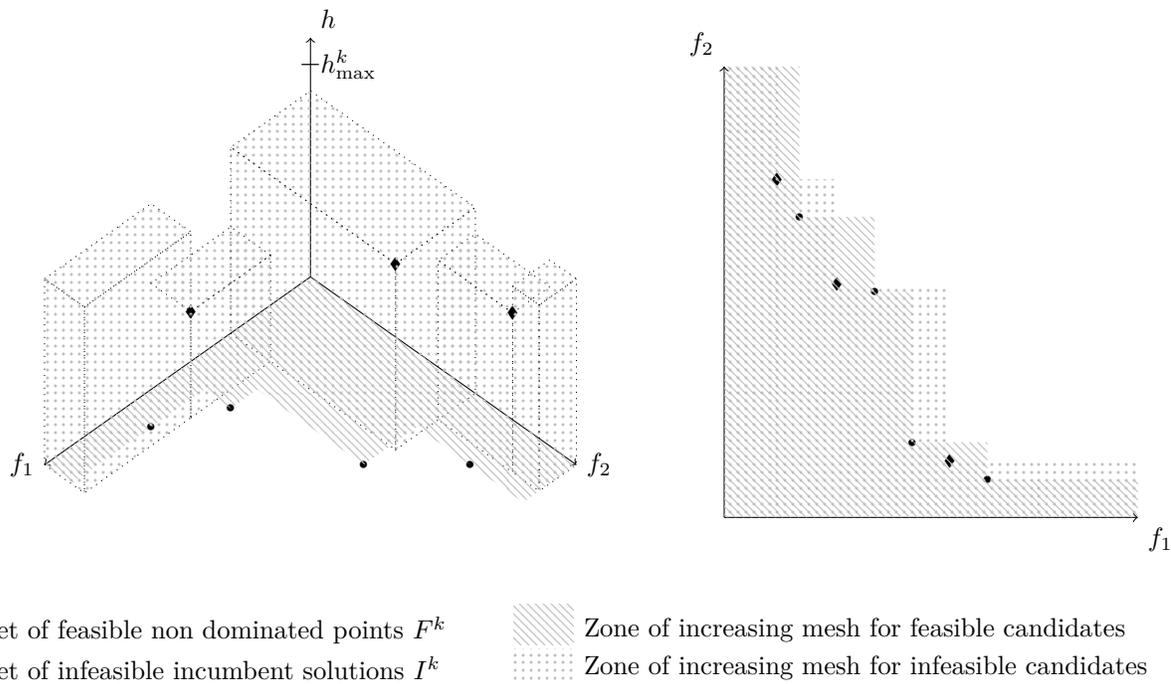

	The threshold barrier is then updated according to the following rules:
	\begin{equation}
	h^{k+1}_{\max} := \begin{cases}
				\displaystyle\max_{x^t \in V^{k+1}} \left\{h(x^t) : h(x^t) < h(x^k_I)\right\} & \text{ if iteration } k \text{ is improving}, \\
				h(x^k_I) & \text{ if } h(x^k_I) = \displaystyle\max_{x \in I^k} h(x),  \\
				\displaystyle\max_{x^t \in V^{k+1}} \left\{ h(x^t) : h(x^k_I) \leq h(x^t) < \max_{x \in I^k} h(x) \right\} & \text{ otherwise.}
				\end{cases}
    \label{eq:hmax_formula}
    \end{equation}

	The threshold update rule guarantees in the case where an iteration is considered as not improving that the set \(I^k\) will change if the infeasible frame incumbent does not possess the maximum violation function value \(h\) among the elements of \(I^k\) at iteration \(k\). Another consequence (similar to MADS-PB~\cite{AuDe09a}) is that \(h^k_{\max}\) is nonincreasing with iteration \(k\) and that if \(I^k \neq \emptyset\), \(I^q \neq \emptyset\) for all iteration indexes \(q \geq k\).
	
	Note that even if an iteration is marked as unsuccessful, the algorithm can still generate new feasible non-dominated points or infeasible non-dominated points below the value \(\max_{x \in I^k} h(x)\), which may be used as frame incumbents in some next iteration.

	
	\begin{remark}
		It is also possible to set the update rules of the threshold \(h^{k+1}_{\max}\) according to the \(h(x^k_I)\) barrier value. Nonetheless, in some preliminary experiments, it has been observed that this approach prevents the algorithm to explore some parts of the objective space, potentially interesting to greatly improve the current feasible solution set. 
	\end{remark}

	Finally, the iterate lists \(L^k_F\) and \(L^k_I\) are filtered to add new non-dominated points generated during iteration \(k\) and remove potential resulting dominated elements.
	
	Algorithm~\ref{alg:summary_DMulti-MADS-PB_algorithm} summarizes the different steps of the DMulti-MADS-PB algorithm.

	\begin{figure}[!th]
		\begin{algorithm}[H]\small
			\caption{The DMulti-MADS-PB algorithm for constrained optimization}
			\begin{algorithmic}
				\STATE \textbf{Initialisation} : Given a finite set of points \(V^0 \subset \mathcal{X}\), choose \(\Delta^0 > 0\), \(D = G Z\) a positive spanning set matrix, \(\tau \in (0, 1) \cap \mathbb{Q}\) the frame size adjustment parameter, and \(w^+ \in \mathbb{N}\) a fixed integer parameter. Define the frame trigger parameter \(\rho > 0\) (optional). Initialize the lists \(L^0_F = \left\{ (x^l_F, \Delta^0), \ l = 1, 2, \ldots, |L^0_F|\right\}\) and \(L^0_I = \left\{ (x^l_I, \Delta^0), \ l = 1, 2, \ldots, |L^0_I|\right\}\) for some \((x^l_F, x^l_I) \in V^0\).
				\FOR{\(k = 0,1,2, \ldots\)}
				\STATE \textbf{Selection of the current infeasible frame centers}. Select feasible and/or infeasible elements of respective iterate lists \(L^k_F\) and \(L^k_I\) as described in~\cite{BiLedSa2020} and Algorithm~\ref{alg:select_point_strategy_infeasible_nofeasible}. Define the current frame size parameter \(\Delta^k\) according to the associated frame size parameters of the feasible incumbent element \((x^k_F, \Delta^k_F)\) and/or infeasible current incumbent element \((x^k_I, \Delta^k_I)\). Set \(\delta^k = \min \left\{\Delta^k, \left(\Delta^k\right)^2\right\}\). Initialize \(L^{add} := \emptyset\).
				\STATE \textbf{Search} (optional) : Evaluate \(f\) and \(h\) at a finite set of points \(S^k \subset \mathcal{X}\) on the mesh \(M^k = \bigcup_{x \in V^k} \{x + \delta^k D z : z \in \mathbb{N}^{n_D}\}\). Set \(L^{add} := \{(x, \Delta^k): x \in S^k\}\).
				
				If an improving or dominating success criterion is satisfied, the search may terminate. In this case, skip the poll and go to the parameter update step.
				\STATE \textbf{Poll} : Select a positive spanning set \(\mathbb{D}^k_{\Delta}\). Evaluate \(f\) and \(h\) on the poll set \(P^k \subset M^k\) as defined in Subsection~\ref{subsect:iteration_dmulti-mads-pb}. Set \(L^{add} := L^{add} \cup \left\{(x, \Delta^k) : x \in P^k \right\}\).
				If an improving or dominating criterion is satisfied, the poll may terminate opportunistically.
				\STATE \textbf{Parameter update} : Define \(V^{k+1}\) as the union of \(V^k\) and all new candidates evaluated in \(\mathcal{X}\) during the search and the poll. Classify the iteration as dominating, improving or unsuccessful. Update \(h^{k+1}_{\max}\) according to Section~\ref{subsect:update_end_iteration}. Remove points above the threshold from \(L^k_I\). Update iterate lists \(L^{k+1}_F\) and/or \(L^{k+1}_I\) by adding new non-dominated points from \(L^{add}\) with their updated associated frame center \(\Delta \in \{\Delta^k, \tau^{-1} \Delta^k\}\), as explained in Section~\ref{subsect:update_end_iteration}. Remove new dominated points from \(L^k_F\) and/or \(L^k_I\).
				
				If the iteration is unsuccessful, replace (if they exist) the frame center elements \((x^k_F, \Delta^k_F)\) and \((x^k_I, \Delta^k_I)\) respectively by \((x^k_F, \Delta^{k+1})\), \((x^k_I, \Delta^{k+1})\) with \(\Delta^{k+1} := \tau \Delta^k\).
				\ENDFOR 
			\end{algorithmic}
			\label{alg:summary_DMulti-MADS-PB_algorithm}
		\end{algorithm}
		\caption{A summary of the DMulti-MADS-PB algorithm, inspired by~\cite{BiLedSa2020}.}
	\end{figure}

	\begin{remark}
		When \(m = 1\), the classification of the different type of iterations used in the DMulti-MADS-PB context is equivalent to the one used for the MADS-PB algorithm~\cite{AuDe09a}. There also exists many configurations of iteration classifications criteria such that the generalization of the MADS-PB algorithm for multiobjective optimization and the convergence properties still hold. For example, one can declare an iteration as dominating when a trial point changes the set \(I^k\). Practically, not all of them have the same performance. The definitions used below correspond to the most efficient variant observed on some preliminary experiments.
	\end{remark}

	\subsubsection{A frame center selection rule for the DMulti-MADS-PB algorithm}\label{subsect:frame_center_selection}
		
	The constrained single-objective MADS-PB algorithm uses a classification of its two frame centers to practically improve the performance of the poll step. More precisely, the two frame centers are ordered, based on their objective values into \textit{primary} and \textit{secondary} poll centers. MADS-PB concentrates more efforts (based on the number of poll directions) on the primary poll center than the secondary poll center~\cite{AuDe09a}. Inspired by this strategy, this subsection proposes an extension of the so-called \textit{frame center selection rule} to constrained multiobjective optimization.
	
	As in the single-objective case, DMulti-MADS-PB executes the poll around at least two frame centers. When \(L^k_F = \emptyset\) or \(L^k_I = \emptyset\), there is only one frame center, designed as the \textit{primary frame center}. A complete set of poll points can be evaluated based on a positive spanning set \(\mathbb{D}^k_{\Delta}\) composed of at least of \(n+1\) directions (more details for the construction of \(\mathbb{D}^k_{\Delta}\) can be found in~\cite{AbAuDeLe09, AuIaLeDTr2014}).
	
	When \(L^k_F\) and \(L^k_I\) are both non-empty, polling is done around a feasible and an infeasible frame centers. DMulti-MADS-PB orders these two frame centers into a \textit{primary} frame center and a \textit{secondary} frame center. This ordering is based on an user-supplied parameter \(\rho > 0\), called the \textit{frame trigger parameter}.
	
	Recall that if \(L^k_F\) and \(L^k_I\) are nonempty, the selection of the infeasible frame center is done based on the \(\psi_{L^k_F}:\mathcal{X} \rightarrow \mathbb{R}\) function parametrized by \(L^k_F\), defined in Section~\ref{subsect:iteration_dmulti-mads-pb}. The following frame center selection rule is then proposed.
	
	\begin{definition}[frame center selection rule]
		Let \(\rho > 0\) provided by the user and suppose that \(L^k_F \neq \emptyset\) and \(L^k_I \neq \emptyset\). Let \((x^k_F, \Delta^k_F) \in L^k_F\) the feasible current incumbent and \((x^k_I, \Delta^k_I) \in L^k_I\) the infeasible current incumbent.  If \(\psi_{L^k_F}(x^k_I) - \rho \ \xi(L^k_F) > 0\), where \(\xi(L^k_F)\) is given by
		\[\xi(L^k_F) = \sum_{i = 1}^{m} \mu\left(\max_{(x, \Delta) \in L^k_F} f_i(x), \min_{(x, \Delta) \in L^k_F} f_i(x)\right)\]
		with \(\mu: \mathbb{R} \times \mathbb{R} \rightarrow \mathbb{R}^+\) defined as
		\[\mu(a, b) = \begin{cases}
			|a - b| & \text{ if } a \neq b, \\
			|a| & \text{ otherwise}; 
		\end{cases}\]
		then the primary poll center is chosen as \(x^k_I\) and the secondary poll center is chosen as \(x^k_F\), otherwise the primary poll center is chosen as \(x^k_F\) and the secondary poll center is chosen as \(x^k_I\).
	\end{definition}
	As for the single-objective MADS-PB algorithm, DMulti-MADS-PB puts more effort on the primary frame center than on the secondary frame center. The implementation of the poll strategy in this work follows the one developed in~\cite{AuIaLeDTr2014}: \(n+1\) directions are used for the primary frame center and \(2\) directions for the secondary frame center by taking the negative of the first one.
	
	If there exists at least one element \((x, \Delta) \in L^k_F\) such that \(f_i(x) \leq f_i(x^k_I)\) for \(i = 1,2, \ldots,m\), then \(x^k_F\) will be chosen as the primary poll center. Figure~\ref{fig:primary_poll_zone_Ik} illustrates the zone in the biobjective space where \(I^k\) elements must be to be considered as potential primary poll centers. One could hope that putting more effort on the infeasible frame center in this case should enable it to reach a better part of the feasible decision region~\cite{AuDe09a}.
	
	\begin{figure}[!th]
		\centering
		\begin{tikzpicture}
			\draw [->] (0,0) -- (8,0) node[below right]{\small{\(f_1\)}};
			\draw [->] (0,0) -- (0,6) node[above left]{\small{\(f_2\)}};
			
			\draw (1.5, 5) node{\tiny{\(\bullet\)}};
			\draw (3, 3.5) node{\tiny{\(\bullet\)}};
			\draw (4.5, 2.5) node{\tiny{\(\bullet\)}};
			\draw (7, 1.5) node{\tiny{\(\bullet\)}};
			
			\draw[dashed, <-] (1.5, 5) -- (7, 5) node[pos=0.7, above]{\small{\(\xi(L^k_F)\)}};
			\draw[dashed, ->] (7,5) -- (7,1.5);
			\draw[thick, <->] (7, 3.2) -- (7,2.4) node[midway, right]{\small{\(\rho \ \xi(L^k_F)\)}};

			\fill[pattern=dots, pattern color=gray!60] (9,4.7) rectangle (10.5, 5.3);
			\draw (10.5, 5) node[right]{\small{Primary poll zone for \(I^k\)}};
			\draw (10.5, 4) node{\tiny{\(\bullet\)}} node[right]{\small{Set of feasible non dominated}};
			\draw (10.5, 3.5) node[right]{\small{points \(F^k\)}};
			
			\fill[pattern=dots, pattern color=gray!60] (0,6) -- (0.7, 6) -- (0.7, 5) -- (1.5, 4.2) -- (2.2, 4.2) -- (2.2, 3.5) -- (3, 2.7) -- (3.7, 2.7) -- (3.7, 2.5) -- (4.5, 1.7) -- (6.2, 1.7) -- (6.2, 1.5) -- (7, 0.7) -- (8,0.7) -- (8, 0) -- (0,0) -- cycle;
			
			\draw[thick, <->] (0.7,5) -- (1.5,5);
			\draw[thick, <->] (1.5,5) -- (1.5,4.2);
			
			\draw[thick, <->] (2.2,3.5) -- (3,3.5);
			\draw[thick, <->] (3,2.7) -- (3,3.5);
			
			\draw[thick, <->] (3.7,2.5) -- (4.5,2.5);
			\draw[thick, <->] (4.5,1.7) -- (4.5,2.5);
			
			\draw[thick, <->] (6.2,1.5) -- (7,1.5);
			\draw[thick, <->] (7,0.7) -- (7,1.5);
			
		\end{tikzpicture}
		\caption{Representation of the selection of \(I^k\) frame incumbent as primary poll in the objective space for a biobjective minimization problem.}
		\label{fig:primary_poll_zone_Ik}
	\end{figure}
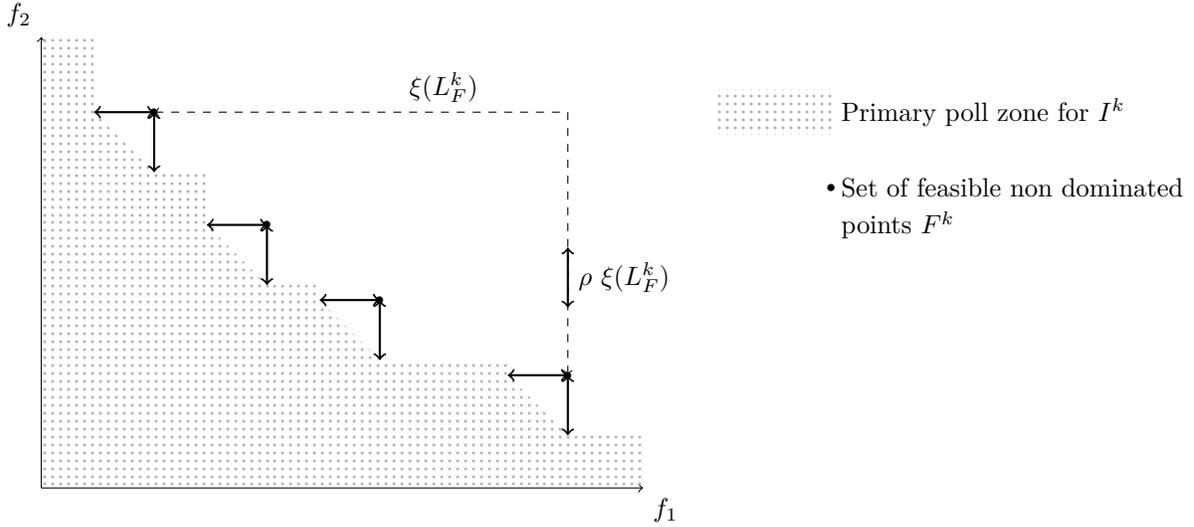

	\begin{remark}
		If \(m = 1\), the frame center selection rule is equivalent to \(f(x^k_I) < f(x^k_F) - \rho | f(x^k_F) |\), used in~\cite{AuCoLedPey2016}. In this work, this rule was privileged to the original one in~\cite{AuCoLedPey2016} \(f(x^k_F) - f(x^k_I) > \rho\), as it takes into account the scale of the objective function values. A corresponding frame center selection rule extension to multiobjective optimisation would have been \(\psi_{L^k_F}(x^k_I) - m \rho > 0\).
	\end{remark}

	\section{Convergence analysis}\label{sect:Convergence_analysis}
	
	This section is devoted to the convergence analysis of the DMulti-MADS-TEB and DMulti-MADS-PB algorithms, inspired by~\cite{AuDe09a, BiLedSa2020}. This work makes use of the following assumptions, taken from~\cite{AuDe09a}:
	\begin{assumption}~\label{assumption:starting_points}
		There exists some point \(x^0\) in the user-provided set \(V^0\) such that \(x^0 \in \mathcal{X}\) and \(f(x^0)\) and \(h(x^0)\) are both finite.
	\end{assumption}

	\begin{assumption}~\label{assumption:boundness_properties}
		All iterates considered by the algorithm lie in a bounded set.
	\end{assumption}

	If Assumption~\ref{assumption:starting_points} is not satisfied, DMulti-MADS cannot start. Assumption~\ref{assumption:boundness_properties}  is ensured if one imposes the existence of a bounded set in \(\mathbb{R}^n\) containing \(V^k\) for all \(k \in \mathbb{N}\). As \(V^k\) for \(k \in \mathbb{N}\) is always composed of points satisfying the unrelaxable constraints, it is sufficient to guarantee that the set of unrelaxable constraints is itself a bounded set. For example, many engineering problems possess bound variables constraints, which cannot be violated.
	
	As in single-objective MADS-PB algorithm~\cite{AuDe09a}, combining assumptions~\ref{assumption:starting_points} and~\ref{assumption:boundness_properties} and the structure of the mesh \(M^k\) enables to show that \(\displaystyle\lim_{k \rightarrow + \infty} \inf \Delta^k = \displaystyle\lim_{k \rightarrow + \infty} \inf \delta^k = 0\) (see for example~\cite[Theorem~ A.1]{CuMaVaVi2010}). The classical convergence analysis of direct search methods focuses on subsequences of generated frame centers for which corresponding mesh size and frame size parameters converge to zero. The following notations and definitions are adapted from~\cite{AuDe09a}.

	Let \(\mathbb{U} \subset \mathbb{N}\) the set of unsuccessful iterations indexes. The poll generates one or several trial points around at least one of the two feasible and infeasible incumbents. If \(k \in \mathbb{U}\) and the poll is executed around the feasible current frame center \(x^k_F\), this last one is designed as a \textit{feasible minimal frame center}. Otherwise, if \(k \in \mathbb{U}\) and the poll is executed around the infeasible current frame center \(x^k_I\), this last one is designed as an \textit{infeasible minimal frame center}. From the rest of this work, these subsequences of frame centers are investigated separately. Note that for the DMulti-MADS-TEB variant, studying a subsequence of infeasible minimal frame centers means that the algorithm does not manage to find a feasible point.
	
	\begin{definition}
		A subsequence \(\{x^k\}_{k \in K}\) of DMulti-MADS frame centers, for some infinite subset of indexes \(K \subseteq \mathbb{U}\) is said to be a \textit{refining subsequence} if \(\{\Delta^k\}_{k \in K}\) converges to \(0\). The limit point \(\hat{x}\) of a refining subsequence is called a \textit{refining point}. 
	\end{definition}

	\begin{definition}
		Given a corresponding refining subsequence \(\{x^k\}_{k \in K}\) and its refining point \(\hat{x}\), a direction \(d\) is said to be a \textit{refining direction} if and only if there exists an infinite subset of indexes \(K' \subseteq K\) such that \(d^k \in \mathbb{D}^k_{\Delta}\) with \(x^k + \delta^k d^k \in \mathcal{X}\) and \(d = \lim_{k \in K'} \frac{d^k}{\|d^k\|}\).
	\end{definition}

	The convergence analysis also requires some mathematical tools from nonsmooth analysis. The following definitions are taken from~\cite{AuDe09a}.
	
	\begin{definition}
		A vector \(d \in \mathbb{R}^n\) is said to be a Clarke tangent vector to the set \(\Omega \subseteq \mathbb{R}^n\) at the point \(x\) in the closure of \(\Omega\) if for every sequence \(\{y^k\}\) of elements of \(\Omega\) that converge to \(x\) and for every sequence of positive real numbers \(\{t^k\}\) converging to zero, there exists a sequence of vectors \(\{w^k\}\) converging to \(d\) such that \(y^k + t^k w^k \in \Omega\).
	\end{definition}

	The set of all Clarke tangent cones to \(\Omega\) at \(x\) is the Clarke tangent cone to \(\Omega\) at \(x\) denoted as \(T^{Cl}_{\Omega}(x)\). The DMulti-MADS analysis in a general constrained optimization context makes use of the hypertangent cone~\cite{Rock1974}, which is the interior of the Clarke tangent cone, defined as:
	\[T^H_{\Omega}(x) = \left\{d \in \mathbb{R}^n : \exists \ \epsilon > 0  \text{ such that } y + t w \in \Omega, \text{ for all } y \in \Omega \cap B_{\epsilon}(x), w \in B_{\epsilon}(d), \text{ and } 0 < t < \epsilon \right\}\]
	where \(B_{\epsilon}(x)\) is the open ball of radius \(\epsilon > 0\) centered at \(x\).
	
	The DMulti-MADS analysis also requires that the objective function \(f\) is locally Lipschitz continuous in \(\mathcal{X}\), i.e. each of its components \(f_i\) for \(i = 1,2, \ldots, m\) is locally Lipschitz continuous in \(\mathcal{X}\). If this condition is satisfied, the Clarke-Jahn generalized derivatives~\cite{Clar83a} of \(f_i\) at \(x \in \mathcal{X}\) in the direction \(d \in \mathbb{R}^n\) exist and are defined by
	\[f^o_i(x; d) = \underset{\begin{array}{c}y \rightarrow x, \ y \in \mathcal{X}\\ t \searrow 0, \ y + td \in \mathcal{X} \end{array}}{\lim \sup} \dfrac{f(y + t d) - f(y)}{t}, \text{ for } i = 1,2, \ldots, m.\]

	This work can then introduce the main stationary conditions.
	
	\begin{definition}
		Let \(f\) be Lipschitz continuous near a point \(\hat{x} \in \Omega\). \(\hat{x}\) is a Pareto-Clarke critical point of \(f\) in \(\Omega\) if for all directions \(d \in T^{Cl}_{\Omega}(\hat{x})\), there exists \(i(d) \in \{1,2, \ldots, m\}\) such that \(f^o_i(d)(\hat{x};d) \geq 0\).
	\end{definition}

	With the additional assumption that \(f\) is equally strictly differentiable at \(\hat{x}\) (i.e. the corresponding Clarke generalized is a singleton containing only the gradient of one objective component at \(\hat{x}\)), the previous stationary result can be reformulated.
	
	\begin{definition}
		Let \(f\) be strictly differentiable at a point \(\hat{x} \in \Omega\). \(\hat{x}\) is a Pareto-Clarke-KKT critical point of \(f\) in \(\Omega\) if for all directions \(d \in T^{Cl}_{\Omega}(\hat{x})\), there exists \(i(d) \in \{1,2, \ldots, m\}\) such that \(\nabla f_{i(d)}(\hat{x})^\top d \geq 0\).
	\end{definition}

	As in the single-objective case~\cite{AuDe09a}, this work divides the convergence analysis into two cases: the study of subsequences of feasible minimal frame centers and the study of subsequences of infeasible minimal frame centers. For each case, the following methodology is used:
	\begin{enumerate}
		\item Prove that a subsequence of mesh size parameters and frame size parameters converges to zero.
		\item Determine a particular subsequence of iterate points associated to the previous subsequence of parameters, i.e. a so-called \textit{refined subsequence}. 
		\item This subsequence of iterate points converges to a refined point. Prove that this point satisfies some stationary properties.
	\end{enumerate}

	\subsection[Feasible case: results for f]{Feasible case: results for \(f\)}
	
	As in~\cite{BiLedSa2020}, one wants to show that starting from a set of feasible points, DMulti-MADS produces at the limit locally stationary points for the constrained multiobjective optimization problem. To do that, this work proves the existence of finer refining subsequences, as it is done in~\cite{BiLedSa2020}.  The following analysis is a summary of the convergence analysis developed in~\cite{BiLedSa2020} and covers the two variants DMulti-MADS-TEB and DMulti-MADS-PB.
	
	\begin{theorem}~\label{thm:max_delta_inf_decrease}
		Let Assumptions~\ref{assumption:starting_points} and~\ref{assumption:boundness_properties} hold and suppose DMulti-MADS generates a sequence of feasible iterates lists \(\{L^k_F\}\) with \(L^k_F = \{(x^j, \Delta^j), j = 1, 2 \ldots, |L^k_F|\}\). Then
		\[\lim_{k \rightarrow + \infty} \inf \delta^k_{\max} = \lim_{k \rightarrow + \infty} \inf \Delta^k_{\max} = 0.\]
	\end{theorem}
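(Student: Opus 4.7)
The plan is to prove the statement by contradiction, following the template of the analysis in~\cite{BiLedSa2020} and adapting~\cite[Theorem~A.1]{CuMaVaVi2010} to the multiobjective iterate list setting. First I would observe that because of the relation \(\delta^l = \min\{\Delta^l, (\Delta^l)^2\}\) between the mesh size and frame size parameters associated to each element of \(L^k_F\), the identity \(\delta^k_{\max} \leq \Delta^k_{\max}\) together with \(\delta^k_{\max} \geq \min\{\Delta^k_{\max}, (\Delta^k_{\max})^2\}\) implies immediately that \(\liminf_{k \to +\infty} \Delta^k_{\max} = 0\) forces \(\liminf_{k \to +\infty} \delta^k_{\max} = 0\). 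Hence it suffices to establish the result for \(\Delta^k_{\max}\).

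Next, I would argue by contradiction: assume there exists \(\Delta^\star > 0\) such that \(\Delta^k_{\max} \geq \Delta^\star\) for all \(k\) large enough. By the frame size adjustment rule, every frame size value produced along the iterations is of the form \(\tau^p \Delta^0\) for some \(p \in \mathbb{Z}\); combined with the selection criterion \(\tau^{w^+} \Delta^k_{\max} \leq \Delta^k \leq \Delta^k_{\max}\), this means the selected frame size parameter \(\Delta^k\) (and thus the mesh size \(\delta^k\)) can only take finitely many values from below \(\tau^{w^+} \Delta^\star\) up to some upper bound. Using the mesh structure \(M^k = \bigcup_{x \in V^k}\{x + \delta^k D z : z \in \mathbb{N}^{n_D}\}\) and Assumption~\ref{assumption:boundness_properties}, the set of points generated by the algorithm while \(\delta^k\) is bounded below lies in the intersection of a bounded set with finitely many translated integer lattices, hence is finite.

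From this finiteness, only finitely many distinct feasible non-dominated points can ever appear in the iterate lists after some index \(k_0\), so the list \(L^k_F\) stabilizes up to modification of frame size parameters. After \(L^k_F\) stabilizes, every iteration is necessarily unsuccessful (no new non-dominated feasible point is produced), and therefore the frame size parameter of the selected frame center is reduced by a factor \(\tau \in (0,1) \cap \mathbb{Q}\). I would then track how \(\Delta^k_{\max}\) evolves: because the selection rule forces the current incumbent \(x^k\) to realize a value of \(\Delta^k\) within a factor \(\tau^{w^+}\) of \(\Delta^k_{\max}\), once the elements attaining or approaching \(\Delta^k_{\max}\) have been repeatedly selected and shrunk, the maximum frame size parameter of the list must itself drop. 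Iterating this argument shows that \(\Delta^k_{\max}\) is eventually reduced below any positive threshold, in contradiction with our assumption \(\Delta^k_{\max} \geq \Delta^\star\).

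The main obstacle is the bookkeeping in the last step: unlike the single-objective case, \(L^k_F\) contains several incumbents with distinct frame size parameters, and a successful iteration can increase some of them by a factor \(\tau^{-1}\). I would therefore argue carefully that, once the list has stabilized so that only unsuccessful iterations occur, frame sizes can only decrease, and combine this with the selection rule \(\Delta^k \geq \tau^{w^+} \Delta^k_{\max}\) to conclude that the element realizing \(\Delta^k_{\max}\) must be picked after a bounded number of iterations, forcing the maximum itself to shrink by \(\tau\) after finitely many steps, which yields the desired contradiction and hence \(\liminf_{k \to +\infty} \Delta^k_{\max} = 0\).
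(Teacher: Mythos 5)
Your argument is correct and is essentially the standard mesh--lattice finiteness/contradiction argument that the paper's own ``proof'' simply outsources to Theorem~5.1 of~\cite{BiLedSa2020} (the paper gives no detail beyond that citation, so you have reconstructed precisely the argument being invoked). One small point to tighten for the PB variant: after the set of evaluable points is exhausted an iteration could in principle also be \emph{improving} (neither dominating nor unsuccessful, leaving frame sizes unchanged), but the same finiteness argument excludes infinitely many of these since an improving iteration requires a newly evaluated point with strictly smaller \(h\).
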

	\begin{proof}
		Combining assumptions~\ref{assumption:starting_points}, ~\ref{assumption:boundness_properties}, and the selection criterion of the feasible frame center with the structure of the mesh has been shown to be enough to ensure \(\displaystyle\lim_{k \rightarrow + \infty} \inf \delta^k_{\max} = \displaystyle\lim_{k \rightarrow + \infty} \inf \Delta^k_{\max} = 0\) (see~\cite[Theorem 5.1]{BiLedSa2020} for more details). 
	\end{proof}

	This work wants to prove the convergence of specific elements of the feasible iterate list generated by DMulti-MADS to stationary points. The concept of a feasible linked list, adapted from~\cite{BiLedSa2020, LiLuRi2016}, is then introduced.
	
	\begin{definition} Suppose DMulti-MADS generates the sequence of feasible iterate lists \(\{L^k_F\}_{k \geq k_0}\) with \(L^k_F = \{(x^l, \Delta^l), l = 1,2, \ldots, |L^k_F|\}\) and \(k_0 \in \mathbb{N}\) the iteration index such that \(k_0 \in \arg\min \left\{ k \in \mathbb{N} : F^k \neq \emptyset \right\}\). A \textit{feasible linked sequence} is defined as a sequence \(\{(x^{l_k}, \Delta^{l_k})\}\) such that there exists an iteration index \(\ell_0 \geq k_0\) such that for any \(k = \ell_0+1, \ell_0+2, \ldots\), the pair \(\left\{(x^{l_k}, \Delta^{l_k})\right\} \in L^k_F\) is generated at iteration \(k-1\) of DMulti-MADS from the pair \((x^{l_{k-1}}, \Delta^{l_{k-1}}) \in L^{k-1}_F\).
	\end{definition}

	For the DMulti-MADS-PB variant algorithm, the following cases can occur:
	\begin{enumerate}
		\item Dominating iteration: either the algorithm generates at least one point which dominates the feasible frame center \(x^{k-1}_F\), or it generates some infeasible points which have triggered the dominating success condition in the infeasible case.
		\begin{itemize}
			\item \(\forall (x^{l_k}, \Delta^{l_k}) \in L^{k}_F \setminus L^{k-1}_F\),
			\[x^{l_k} = x^{k-1} + \delta^{k-1} D z^{k-1} \text{ for some } z^{k-1} \in \mathbb{N}^{n_D} \text{ and } \Delta^{l_k} \in \{\Delta^{k-1}, \tau^{-1} \Delta^{k-1}\}\]
			with \(x^{k-1} \in \{x^{k-1}_F, x^{k-1}_I\}\).
			\item \(\forall (x^{l_k}, \Delta^{l_k}) \in L^{k}_F \cap L^{k-1}_F\),
			\[x^{l_k} = x^{l_{k-1}} \text{ and } \Delta^{l_k} = \Delta^{l_{k-1}}.\]
		\end{itemize}
		\item Improving iteration: the algorithm may generate some new feasible non-dominated points without dominating the feasible frame incumbent.
		\begin{itemize}
			\item \(\forall (x^{l_k}, \Delta^{l_k}) \in L^{k}_F \setminus L^{k-1}_F\),
			\[x^{l_k} = x^{k-1} + \delta^{k-1} D z^{k-1} \text{ for some } z^{k-1} \in \mathbb{N}^{n_D} \text{ and } \Delta^{l_k} \in \{\Delta^{k-1}, \tau^{-1} \Delta^{k-1}\}\]
			with \(x^{k-1} \in \{x^{k-1}_F, x^{k-1}_I\}\).
			\item \(\forall (x^{l_k}, \Delta^{l_k}) \in L^{k}_F \cap L^{k-1}_F\),
			\[x^{l_k} = x^{l_{k-1}} \text{ and } \Delta^{l_k} = \Delta^{l_{k-1}}.\]
		\end{itemize}
		\item Unsuccessful iteration: the algorithm may generate some new feasible non-dominated points without dominating the feasible frame incumbent.
		\begin{itemize}
			\item \(\forall (x^{l_k}, \Delta^{l_k}) \in L^{k}_F \setminus L^{k-1}_F\),
			\[x^{l_k} = x^{k-1} + \delta^{k-1} D z^{k-1} \text{ for some } z^{k-1} \in \mathbb{N}^{n_D} \text{ and } \Delta^{l_k} \in \{\Delta^{k-1}, \tau^{-1} \Delta^{k-1}\}\]
			with \(x^{k-1} \in \{x^{k-1}_F, x^{k-1}_I\}\).
			\item \(\forall (x^{l_k}, \Delta^{l_k}) \in (L^{k}_F \cap L^{k-1}_F) \setminus \{(x^{k-1}_F, \Delta^{k-1}_F)\}\),
			\[x^{l_k} = x^{l_{k-1}} \text{ and } \Delta^{l_k} = \Delta^{l_{k-1}}.\]
			\item \(\forall (x^{l_k}, \Delta^{l_k}) \in \{(x^{k-1}_F, \Delta^{k-1}_F)\}\),
			\[x^{l_k} = x^{k-1}_F \text{ and } \Delta^{l_k} = \tau \Delta^{k-1}.\]
		\end{itemize}
	\end{enumerate}
	Similar relations can be drawn for the DMulti-MADS-TEB variant algorithm : note that for all \(k > k_0\), no point at iteration \(k\) can be generated from an infeasible point at iteration \(k-1\).
	
	One can then prove that feasible linked sequences contain a feasible refining subsequence. The original proof can be found in~\cite{BiLedSa2020}, but for better understanding, it is restated below.
	
	\begin{theorem}
		Let assumptions~\ref{assumption:starting_points} and~\ref{assumption:boundness_properties} hold and suppose DMulti-MADS generates the sequence of feasible iterate lists \(\{L^k_F\}_{k \geq k_0}\) with \(L^k_F = \{(x^l, \Delta^l), l = 1,2,\ldots, |L^k_F|\}\) and \(k_0 \in \mathbb{N}\) the iteration index such that \(k_0 \in \arg \min \left\{k \in \mathbb{N} : F^k \neq \emptyset\right\}\). Then every feasible linked sequence \(\{(x^{l_k}, \Delta^{l_k})\}\) contains a refining subsequence \(\{x^{l_k}\}_{k \in K}\) for some infinite subset of indexes \(K \subset \mathbb{U}\).
	\end{theorem}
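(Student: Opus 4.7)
The plan is to combine Theorem~\ref{thm:max_delta_inf_decrease} with a careful bookkeeping of how the frame size parameter \(\Delta^{l_k}\) of an element of the linked sequence can evolve from one iteration to the next. First, since \(\Delta^{l_k} \leq \Delta^k_{\max}\) for every \(k \geq \ell_0+1\) by definition of the maximum frame size, Theorem~\ref{thm:max_delta_inf_decrease} immediately yields \(\liminf_{k \to +\infty} \Delta^{l_k} = 0\). The goal is then to extract an infinite subset \(K \subseteq \mathbb{U}\) of unsuccessful iterations along which \(x^{l_k}\) serves as the feasible frame center and \(\Delta^{l_k} \to 0\), which is precisely what is required to exhibit a refining subsequence contained in the linked sequence.

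The core step is to inspect the three update cases (dominating, improving, unsuccessful) listed right before the theorem and verify that the inequality \(\Delta^{l_k} \geq \Delta^{l_{k-1}}\) holds for every transition that is not an unsuccessful update at the feasible frame center. Pure inheritance \((x^{l_k}, \Delta^{l_k}) = (x^{l_{k-1}}, \Delta^{l_{k-1}})\) is trivial. For the generation case, because the linked sequence definition requires \((x^{l_k}, \Delta^{l_k}) \in L^k_F\) to be generated from \((x^{l_{k-1}}, \Delta^{l_{k-1}}) \in L^{k-1}_F\), the previous pair must coincide with the feasible frame center used at iteration \(k-1\); thus \(\Delta^{l_{k-1}} = \Delta^{k-1}\) and \(\Delta^{l_k} \in \{\Delta^{k-1}, \tau^{-1}\Delta^{k-1}\}\), so that \(\Delta^{l_k} \geq \Delta^{l_{k-1}}\). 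Consequently, the only mechanism producing a strict decrease \(\Delta^{l_k} < \Delta^{l_{k-1}}\) is the unsuccessful update \(\Delta^{l_k} = \tau \Delta^{l_{k-1}}\) with \(x^{l_{k-1}} = x^{k-1}_F\), i.e., iteration \(k-1\) lies in \(\mathbb{U}\) and the linked-sequence element is the feasible frame center.

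Given this monotonicity obstruction together with \(\liminf_{k \to +\infty} \Delta^{l_k} = 0\) and \(\tau \in (0,1)\) fixed, infinitely many strict decreases must occur. To produce the desired subsequence, I would consider the running minimum \(m^k = \min_{\ell_0+1 \leq j \leq k} \Delta^{l_j}\) and collect the indices where a new strict minimum is attained. Each such index \(k\) satisfies \(\Delta^{l_k} < \Delta^{l_{k-1}}\) and is therefore preceded by an unsuccessful iteration \(k-1\) at which \(x^{l_{k-1}} = x^{k-1}_F\); setting \(K = \{k-1 : m^k < m^{k-1}\} \subseteq \mathbb{U}\) gives an infinite set along which \(\Delta^{l_{k-1}} = \tau^{-1} m^k \to 0\). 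The subsequence \(\{x^{l_{k-1}}\}_{k-1 \in K}\) is then a refining subsequence in the sense of the definition stated earlier.

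The main obstacle is the second step: unpacking precisely what ``generated from \((x^{l_{k-1}}, \Delta^{l_{k-1}}) \in L^{k-1}_F\)'' means and matching it against each of the three iteration-type update rules so that strict decreases of \(\Delta^{l_k}\) can be attributed exclusively to unsuccessful iterations whose feasible frame center coincides with \(x^{l_{k-1}}\); care is needed here because at an unsuccessful iteration new feasible non-dominated points can still appear, and one must rule out that these could have a smaller associated frame size than their parent. Once this case analysis is in place, the extraction of \(K\) via running minima and the conclusion \(\Delta^{l_{k-1}} \to 0\) are routine.
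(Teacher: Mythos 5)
Your proposal is correct and follows essentially the same route as the paper: bound \(\Delta^{l_k}\) by \(\Delta^k_{\max}\), invoke Theorem~\ref{thm:max_delta_inf_decrease} together with the squeeze theorem to obtain \(\liminf_{k \to +\infty} \Delta^{l_k} = 0\), and conclude that the linked sequence contains a refining subsequence. The only difference is that you make explicit the extraction of the unsuccessful-iteration indices (via the case analysis of the update rules and the running-minimum argument), a step the paper compresses into the phrase ``which implies by definition the existence of a refining feasible subsequence.''
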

	\begin{proof}
		\(\forall k \geq k_0\), \(0 \leq \Delta^{l_k} \leq \Delta^k_{\max}\). By combining Theorem~\ref{thm:max_delta_inf_decrease} and the squeeze theorem, one gets
		\[\lim_{k \rightarrow + \infty} \inf \Delta^{l_k} = \lim_{k \rightarrow + \infty} \inf \Delta^k_{\max} = 0,\]
		which implies by definition the existence of a refining feasible subsequence within \(\{(x^{l_k}, \Delta^{l_k})\}\). 
	\end{proof}

	The analysis which follows is similar to~\cite{BiLedSa2020}.
	
	\begin{theorem}~\label{thm:direction_clarke_stationary}
		Let assumptions~\ref{assumption:starting_points} and~\ref{assumption:boundness_properties} hold and suppose DMulti-MADS generates a feasible refining subsequence~\(\{x^k_F\}_{k \in K}\), with \(x^k_F \in F^k\), converging to a refining point \(\hat{x}_F \in \Omega\). Assume that \(f\) is Lipschitz continuous near \(\hat{x}_F\). If \(d \in T^H_\Omega(\hat{x}_F)\) is a refining direction for \(\hat{x}_F\), then there exists an objective index \(i(d) \in \{1,2,\ldots,m\}\) such that \(f^o_{i(d)}(\hat{x}_F; d) \geq 0\).
	\end{theorem}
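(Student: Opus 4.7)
The plan is to combine three ingredients: (i) the refining direction framework to extract a suitable limiting direction and associated trial points, (ii) the hypertangent cone assumption $d \in T^H_\Omega(\hat{x}_F)$ to guarantee that those trial points are feasible for large $k$, and (iii) a pigeon-hole argument over the finite objective index set $\{1,2,\ldots,m\}$ to isolate one component of $f$ on which the Clarke--Jahn generalized derivative can be lower bounded.

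First I would unpack the refining direction hypothesis: by definition there exist an infinite subset $K' \subseteq K$ and poll directions $d^k \in \mathbb{D}^k_\Delta$ with $x^k_F + \delta^k d^k \in \mathcal{X}$ and $d^k / \|d^k\| \to d$ along $K'$. Since $\Delta^k \to 0$ along $K$, the linkage $\delta^k = \min\{\Delta^k, (\Delta^k)^2\}$ forces $\delta^k \to 0$, and because the normalized poll directions live in a bounded set generated by the columns of $D$, the scaled step sizes $t^k := \delta^k \|d^k\|$ also tend to $0$. Setting $y^k := x^k_F \in \Omega$ with $y^k \to \hat{x}_F \in \Omega$ and $w^k := d^k / \|d^k\| \to d$, the definition of the hypertangent cone applied to $d \in T^H_\Omega(\hat{x}_F)$ yields $x^k_F + \delta^k d^k = y^k + t^k w^k \in \Omega$ for all sufficiently large $k \in K'$. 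I may then truncate $K'$ so that every such trial point is feasible.

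Next I would exploit that $k \in K' \subseteq K \subseteq \mathbb{U}$ is unsuccessful: no trial feasible point dominates $x^k_F$, so for the feasible trial point $x^k_F + \delta^k d^k$ there must exist an index $i_k \in \{1,2,\ldots,m\}$ with $f_{i_k}(x^k_F + \delta^k d^k) \geq f_{i_k}(x^k_F)$. Since the set of possible indices is finite, a standard pigeon-hole argument produces a further infinite subsequence $K'' \subseteq K'$ on which a single $i_0$ satisfies this inequality for every $k \in K''$.

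Finally I would invoke the Clarke--Jahn generalized derivative of $f_{i_0}$ at $\hat{x}_F$ in the direction $d$, using $y = x^k_F \to \hat{x}_F$ within $\mathcal{X}$ and the step $t^k = \delta^k \|d^k\| \searrow 0$ along $K''$, to conclude
\[
f^o_{i_0}(\hat{x}_F; d) \;\geq\; \limsup_{k \in K''} \frac{f_{i_0}(x^k_F + \delta^k d^k) - f_{i_0}(x^k_F)}{\delta^k \|d^k\|} \;\geq\; 0,
\]
with $i(d) := i_0$. The main obstacle is precisely this last step: the quotient involves the polling direction $d^k / \|d^k\|$ rather than its limit $d$, so to legitimately bound $f^o_{i_0}(\hat{x}_F; d)$ I must use the Lipschitz continuity of $f_{i_0}$ near $\hat{x}_F$ to absorb the discrepancy $\|d^k / \|d^k\| - d\| \to 0$ into a vanishing error, while simultaneously relying on the hypertangent property to guarantee that the trial points stay in $\Omega$ so that the quotient genuinely contributes to the $\limsup$ defining $f^o_{i_0}(\hat{x}_F; d)$.
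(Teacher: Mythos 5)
Your proposal is correct and follows essentially the same route as the paper's proof: unpack the refining direction along a subsequence $K'\subseteq K$, use the hypertangent property to place the trial points $x^k_F+\delta^k d^k$ in $\Omega$, invoke unsuccessfulness plus a pigeon-hole argument over the finite index set to extract $K''$ and a fixed index, and bound the Clarke--Jahn derivative below by the resulting difference quotients, absorbing the gap between $d^k/\|d^k\|$ and $d$ via the Lipschitz constant. If anything, you are more explicit than the paper about why the trial points are feasible (the paper uses this silently when asserting that $x^k_F+\delta^k d^k\in\Omega$ does not dominate $x^k_F$), but the underlying argument is the same.
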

	\begin{proof}
		Let \(\{x^k_F\}_{k \in K}\), with \(x^k_F \in F^k\), be a refining subsequence converging to a feasible refining point \(\hat{x}_F \in \Omega\) and \(d = \lim_{k \in K'} \frac{d^k}{\|d^k\|} \in T^H_{\Omega}(\hat{x}^F)\) a refining direction for \(\hat{x}_F\), where \(K' \subseteq K\) is an infinite subsequence of some infinite subset of unsuccessful iteration indexes, with poll directions \(d^k \in \mathbb{D}^k_\Delta\) such that \(x^k_F + \delta^k d \in \Omega\). Denote by \(\nu \geq 0 \) the Lipschitz constant of \(f\) near \(\hat{x}_F\).
		
		Then, for \(i \in \{1,2,\ldots,m\}\), the inequality
		\[\begin{array}{ccl}
		f(\hat{x}_F; d) & = & f_i^o(\hat{x}_F;d) + \lim \displaystyle\sup_{k \in K'} \dfrac{\nu \ \delta^k \|d^k\| \left\| \frac{d^k}{\|d^k\|} - d \right\|}{\delta^k \|d^k\|} \\
		& \geq & f_i^o(\hat{x}_F;d) + \lim \displaystyle\sup_{k \in K'} \dfrac{|f_i\left(x^k_F + \delta^k d^k\right) - f_i\left(x^k_F + \delta^k \|d^k\| d\right)|}{\delta^k \|d^k\|} \\
		& \geq & \lim \displaystyle\sup_{k \in K'} \dfrac{f_i\left(x^k_F + \delta^k \|d^k\|d\right) - f(x^k_F)}{\delta^k \|d^k\|} \\
		& & + \lim \displaystyle\sup_{k \in K'} \dfrac{|f_i\left(x^k_F + \delta^k d^k\right) - f_i\left(x^k_F + \delta^k \|d^k\| d\right)|}{\delta^k \|d^k\|} \\
		& \geq & \lim \displaystyle\sup_{k \in K'}  \dfrac{f(x^k_F + \delta^k d^k) - f_i\left(x^k_F + \delta^k \|d^k\| d\right) + f_i\left(x^k_F + \delta^k \|d^k\| d\right) - f_i(x^k_F)}{\delta^k \|d^k\|} \\
		& = & \lim \displaystyle\sup_{k \in K'} \dfrac{f_i(x^k_F + \delta^k d) - f_i(x^k_F)}{\delta^k \|d^k\|}		
		\end{array}\]
		is satisfied.
		
		\(\{x^k_F\}_{k \in K}\) being a refining subsequence, the infinite subset of indexes \(K' \subseteq K\) corresponds to unsuccessful iterations. Consequently, the point \(x^k_F + \delta^k d \in \Omega\) does not dominate \(x^k_F\). One can then find an infinite subsequence of indexes \(K'' \subset K'\) such that there exists an index \(i(d) \in \{1,2,\ldots, m\}\) satisfying
		\[f_{i(d)}(\hat{x}_F; d) \geq \lim \displaystyle\sup_{k \in K''} \dfrac{f_{i(d)}(x^k_F + \delta^k d) - f_{i(d)}(x^k_F)}{\delta^k \|d^k\|} \geq 0.\]
	\end{proof}

	When the set of refining directions is dense in a non-empty hypertangent cone at \(\Omega\), Pareto Clarke stationarity is ensured, similarly to the analysis conducted in~\cite{BiLedSa2020, CuMaVaVi2010}.
	
	\begin{theorem}~\label{thm:Pareto_Clarke_stationnarity}
		Let assumptions~\ref{assumption:starting_points} and~\ref{assumption:boundness_properties} and suppose DMulti-MADS generates a feasible refining subsequence \(\{x^k_F\}_{k \in K}\), with \(x^k_F \in F^k\), converging to a refining point \(\hat{x}_F \in \Omega\). Assume that f is Lipschitz continuous near \(\hat{x}_F\) and \(T_{\Omega}^H(\hat{x}_F) \neq \emptyset\). If the set of refining directions is dense for \(\hat{x}_F\) in \(T_\Omega^{Cl}(\hat{x}_F)\), then \(\hat{x}_F\) is a Pareto-Clarke critical point of \((MOP)\).
	\end{theorem}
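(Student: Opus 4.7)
The plan is to reduce the Pareto--Clarke criticality condition (which concerns all directions in the Clarke tangent cone $T^{Cl}_\Omega(\hat{x}_F)$) to the conclusion of Theorem~\ref{thm:direction_clarke_stationary} (which concerns only refining directions in $T^H_\Omega(\hat{x}_F)$). The hypothesis that the set of refining directions is dense in $T^{Cl}_\Omega(\hat{x}_F)$ and the fact that $T^H_\Omega(\hat{x}_F) \neq \emptyset$ together imply, via a standard result on Clarke/hypertangent cones, that any $d \in T^{Cl}_\Omega(\hat{x}_F)$ can be approximated by refining directions lying in $T^H_\Omega(\hat{x}_F)$.

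First, I would fix an arbitrary $d \in T^{Cl}_\Omega(\hat{x}_F)$. By the density hypothesis, there exists a sequence of refining directions $\{d^n\} \subset T^H_\Omega(\hat{x}_F)$ with $d^n \to d$. Applying Theorem~\ref{thm:direction_clarke_stationary} to each $d^n$ yields an index $i(d^n) \in \{1, 2, \ldots, m\}$ such that $f^o_{i(d^n)}(\hat{x}_F; d^n) \geq 0$. Since $\{1, 2, \ldots, m\}$ is finite, a pigeonhole argument provides an infinite subsequence indexed by $N' \subseteq \mathbb{N}$ along which the chosen objective index is a constant value $i^\star \in \{1, 2, \ldots, m\}$, so that $f^o_{i^\star}(\hat{x}_F; d^n) \geq 0$ for all $n \in N'$.

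Next, I would pass to the limit $n \to \infty$ in $N'$. Since $f$ is assumed Lipschitz continuous near $\hat{x}_F$, each component $f_{i^\star}$ is likewise Lipschitz near $\hat{x}_F$ with some constant $\nu \geq 0$. A classical property of the Clarke--Jahn generalized directional derivative (see \cite{Clar83a}) states that $d \mapsto f^o_{i^\star}(\hat{x}_F; d)$ is itself $\nu$-Lipschitz continuous on $\mathbb{R}^n$, and in particular upper semicontinuous. Therefore
\[
f^o_{i^\star}(\hat{x}_F; d) \;=\; \lim_{n \in N'} f^o_{i^\star}(\hat{x}_F; d^n) \;\geq\; 0.
\]
Since $d \in T^{Cl}_\Omega(\hat{x}_F)$ was arbitrary, $\hat{x}_F$ satisfies the Pareto--Clarke criticality condition, and the conclusion follows.

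The main obstacle is the pigeonhole step combined with the limit passage: the index $i(d^n)$ provided by Theorem~\ref{thm:direction_clarke_stationary} may depend on $d^n$ in a highly discontinuous way, so one cannot pass to the limit directly on the inequality $f^o_{i(d^n)}(\hat{x}_F; d^n) \geq 0$ without first stabilizing the index. Finiteness of the objective set makes this stabilization automatic, which is precisely why the argument works in the multiobjective setting; the Lipschitz continuity of $f^o_{i^\star}(\hat{x}_F; \cdot)$ then takes care of the remaining limit.
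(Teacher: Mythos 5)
Your proof follows essentially the same route as the paper's: approximate an arbitrary $d \in T^{Cl}_\Omega(\hat{x}_F)$ by refining directions in the hypertangent cone, apply Theorem~\ref{thm:direction_clarke_stationary} to each, stabilize the objective index by pigeonhole on the finite set $\{1,\ldots,m\}$, and pass to the limit. The one point to tighten is the justification of the limit passage: you invoke the classical fact that $d \mapsto f^o_{i^\star}(\hat{x}_F;d)$ is $\nu$-Lipschitz on all of $\mathbb{R}^n$, but that property is established for the \emph{unconstrained} Clarke derivative, whereas the derivative used here is the Clarke--Jahn version with the limsup restricted to $y \in \mathcal{X}$ and $y+td \in \mathcal{X}$, for which global Lipschitz continuity in $d$ is not automatic. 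The paper instead cites the result of Audet and Dennis~\cite{AuDe2006} that, for $v$ in the Clarke tangent cone, $f^o_i(\hat{x}_F;v) = \lim_{d \to v,\, d \in T^H_\Omega(\hat{x}_F)} f^o_i(\hat{x}_F;d)$ --- which is precisely the continuity statement needed when the approximating directions lie in the hypertangent cone, as yours do. With that substitution your argument is the paper's argument.
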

	\begin{proof}
		The authors in~\cite{AuDe2006} prove than for any direction \(v\) in the Clarke tangent cone,
		\[f^0_i(\hat{x}_F; v) = \lim_{\begin{array}{c} d \in T_{\Omega}^{H}(\hat{x}_F) \\ d \rightarrow v \end{array}} f_i^o(\hat{x}_F; d) \text{ for } i = 1,2, \ldots, m.\]
		By hypothesis, the set of refining directions is dense for \(x_{F} \in \Omega\) in \(T^{Cl}_{\Omega}(\hat{x}_F)\). Then there exists a sequence of refining directions \(\{d_r\}_{r \in R} \in T_{\Omega}^H\) for \(\hat{x}_F\) such that \(\lim_{r \in R} d_r = v\). Since the number of components of the objective function is finite, one can find a subsequence \(\{d_r\}_{r \in R'}\) with \(R' \subseteq R\) such that \(v = \lim_{r \in R'} d_r\) and \(f_{i(v)}(\hat{x}_F; v) \geq 0\) by Theorem~\ref{thm:direction_clarke_stationary} for all indexes \(r \in R'\). Passing at the limit concludes the proof.
	\end{proof}

	\subsection[Infeasible case: results for h]
	{Infeasible case: results for \(h\)}
	
	In this subsection, the goal is to analyse refining subsequences of infeasible points according to the \(h\) violation function as in~\cite{AuDe09a} for the single-objective constrained case. Two cases can occur. The refining point \(\hat{x}_I\) of an infeasible refining subsequence satisfies \(h(x_I) = 0\). In this case, it means that the feasible set is non-empty, and that \(\hat{x}_I\) is a global minimum for the single-objective problem \(\min_{x \in \mathcal{X}} h(x)\). Otherwise, this work proves than \(\hat{x}_I\) satisfies some stationarity results relatively to \(h\). Note that the DMulti-MADS-TEB variant generates an infeasible refining subsequence if and only if it starts from an infeasible point belonging to \(V^0\) and generates no feasible points along the iterations.
	
	Contrary to the feasible case, this work does not characterize particular infeasible sequences of points within the sequence of infeasible frame incumbents. 
	
	\begin{theorem}~\label{thm:direction_clarke_stationary_h}
		Let assumptions~\ref{assumption:starting_points} and~\ref{assumption:boundness_properties} hold and suppose DMulti-MADS generates a refining subsequence \(\{x^k_I\}_{k \in K}\), with \(x^k_I \in I^k\), converging to a refining point \(\hat{x}_I \in X\). Assume that \(h\) is Lipschitz continuous near \(\hat{x}_I\). If \(d \in T^H_{\mathcal{X}}(\hat{x}_I)\) is a refining direction for \(\hat{x}_I\), then \(h^o(\hat{x}_I; d) \geq 0\).
	\end{theorem}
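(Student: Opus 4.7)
The plan is to mirror the structure of Theorem~\ref{thm:direction_clarke_stationary}, but applied to the single-output constraint violation function $h$ in place of an objective component $f_i$. The simplification relative to the feasible case is that there is no need to extract a sub-subsequence to isolate a particular non-improved component: the classification of iterations guarantees directly that, at an unsuccessful iteration polled around the infeasible frame center, the generated poll trial point cannot strictly decrease $h$.

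First, I would fix a refining subsequence $\{x^k_I\}_{k \in K}$ converging to $\hat{x}_I$ and extract the infinite subset $K' \subseteq K$ of unsuccessful iterations for which the poll set is centered at $x^k_I$ with poll direction $d^k \in \mathbb{D}^k_{\Delta}$ satisfying $d^k/\|d^k\| \to d$. Because $d \in T^H_{\mathcal{X}}(\hat{x}_I)$ and $\delta^k \to 0$, for all sufficiently large $k \in K'$ the point $x^k_I + \delta^k \|d^k\| d$ belongs to $\mathcal{X}$, so that $h$ is finite there and its Lipschitz property near $\hat{x}_I$ (with constant $\nu \geq 0$) applies along both $x^k_I + \delta^k d^k$ and $x^k_I + \delta^k \|d^k\| d$.

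Second, I would invoke the iteration classification of Section~\ref{subsect:update_end_iteration}. If some $x^t = x^k_I + \delta^k d^k$ satisfied $h(x^t) < h(x^k_I)$, then either $x^t \prec_h x^k_I$ (making the iteration dominating) or $h(x^t)<h(x^k_I)$ with at least one index $i_0$ for which $f_{i_0}(x^k_I)<f_{i_0}(x^t)$ (making the iteration improving). Both possibilities contradict $k \in K' \subset \mathbb{U}$. Hence, for every $k \in K'$,
\[
h(x^k_I + \delta^k d^k) \;\geq\; h(x^k_I).
\]

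Third, I would plug this inequality into the standard MADS-PB estimation, exactly as in the proof of Theorem~\ref{thm:direction_clarke_stationary}. Using Lipschitz continuity,
\[
\bigl|h(x^k_I + \delta^k d^k) - h(x^k_I + \delta^k \|d^k\| d)\bigr|
\;\leq\; \nu\, \delta^k \|d^k\|\, \Bigl\| \tfrac{d^k}{\|d^k\|} - d \Bigr\|,
\]
so that dividing by $\delta^k \|d^k\|$ and taking $\limsup$ along $k \in K'$, together with $d^k/\|d^k\| \to d$, yields
\[
h^o(\hat{x}_I;d)
\;\geq\;
\limsup_{k \in K'} \dfrac{h(x^k_I + \delta^k \|d^k\| d) - h(x^k_I)}{\delta^k \|d^k\|}
\;\geq\; 0,
\]
by inserting and subtracting $h(x^k_I + \delta^k d^k)$ inside the numerator.

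The main technical point — really the only non-routine step — is confirming that unsuccessfulness prevents any strict decrease of $h$ at the polled trial point. Once the contrapositive reading of the dominating/improving definitions is stated, the rest is a verbatim adaptation of the feasible-case argument. No density assumption on refining directions is needed here: a single refining direction $d \in T^H_{\mathcal{X}}(\hat{x}_I)$ suffices to conclude $h^o(\hat{x}_I;d) \geq 0$, which matches the single-objective MADS-PB result of~\cite{AuDe09a}.
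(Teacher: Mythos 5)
Your overall route is exactly the paper's: the paper's proof of this theorem is literally ``same as Theorem~\ref{thm:direction_clarke_stationary} with \(h\) in place of \(f_i\) and \(\mathcal{X}\) in place of \(\Omega\)'', and your Lipschitz/limsup chain in the third step is a faithful instantiation of that. However, the step you yourself single out as the only non-routine one contains a gap. Your dichotomy --- ``\(h(x^t) < h(x^k_I)\) forces the iteration to be dominating or improving'' --- is only valid when the trial point \(x^t = x^k_I + \delta^k d^k\) is \emph{infeasible}. If \(x^t\) happens to be feasible, then \(h(x^t) = 0 < h(x^k_I)\), yet the relation \(x^t \prec_h x^k_I\) cannot hold (it is defined only between two infeasible points) and the improving condition cannot trigger either (it requires \(0 < h(x^t)\)); the iteration is then unsuccessful whenever \(x^t \not\prec_f x^k_F\), even though \(h\) strictly decreased. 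So the inequality \(h(x^k_I + \delta^k d^k) \geq h(x^k_I)\) can fail along \(K'\), and your claim that ``unsuccessfulness prevents any strict decrease of \(h\) at the polled trial point'' is false as stated.

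The patch is short and is implicit in the paper's framing just before the theorem, which splits the analysis according to whether \(h(\hat{x}_I) = 0\) or \(h(\hat{x}_I) > 0\). If \(h(\hat{x}_I) = 0\), then \(\hat{x}_I\) is a global minimizer of the nonnegative function \(h\) on \(\mathcal{X}\) and \(h^o(\hat{x}_I; d) \geq \limsup_{t \searrow 0} h(\hat{x}_I + t d)/t \geq 0\) holds trivially. If \(h(\hat{x}_I) > 0\), then since \(x^k_I + \delta^k d^k \to \hat{x}_I\) and \(h\) is Lipschitz (hence continuous) near \(\hat{x}_I\), the trial points are infeasible for all sufficiently large \(k \in K'\), and on that tail your dichotomy and the rest of your argument go through verbatim. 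With that one case distinction added, the proof is complete and coincides with the paper's intended argument.
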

	\begin{proof}
		The proof is similar to that of~\ref{thm:direction_clarke_stationary}, \(h\) and \(\mathcal{X}\) playing respectively the roles of \(f_i\) for a fixed objective index \(i \in \{1,2,\ldots, m\}\) and \(\Omega\).
	\end{proof}

	The next theorem's proof is identical to~\ref{thm:Pareto_Clarke_stationnarity}.

	\begin{theorem}
		Let assumptions~\ref{assumption:starting_points} and~\ref{assumption:boundness_properties} hold and suppose DMulti-MADS generates a refining subsequence \(\{x^k_I\}_{k \in K}\), with \(x^k_I \in I^k\), converging to a refining point \(\hat{x}_I \in \mathcal{X}\). Assume that \(h\) is Lipschitz continuous near \(\hat{x}_I\) and \(T_{\mathcal{X}}^H(\hat{x}_I) \neq \emptyset\). If the set of refining directions is dense in \(T_{\mathcal{X}}^{Cl}(\hat{x}_I)\), then \(\hat{x}_I\) is a Clarke stationary point for
		\[\min_{x \in \mathcal{X}} h(x).\]
	\end{theorem}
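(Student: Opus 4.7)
The plan is to mirror the argument of Theorem~\ref{thm:Pareto_Clarke_stationnarity} almost verbatim, with the scalar constraint violation function \(h\) playing the role of the individual objective components \(f_i\) and the unrelaxable constraint set \(\mathcal{X}\) playing the role of the feasible set \(\Omega\). The preceding Theorem~\ref{thm:direction_clarke_stationary_h} already does the heavy lifting: it guarantees that for any refining direction \(d \in T^H_{\mathcal{X}}(\hat{x}_I)\) we have \(h^o(\hat{x}_I;d) \geq 0\). What remains is to extend this inequality from the hypertangent cone to the full Clarke tangent cone by a density argument.

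The first step is to invoke the characterization established in~\cite{AuDe2006} which, applied to \(h\), states that for any \(v \in T^{Cl}_{\mathcal{X}}(\hat{x}_I)\),
\[
h^o(\hat{x}_I; v) \;=\; \lim_{\substack{d \in T^H_{\mathcal{X}}(\hat{x}_I) \\ d \to v}} h^o(\hat{x}_I; d).
\]
This continuity-type property requires precisely the hypothesis \(T^H_{\mathcal{X}}(\hat{x}_I) \neq \emptyset\) together with the local Lipschitz continuity of \(h\) near \(\hat{x}_I\), both of which are assumed.

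The second step is to use the density hypothesis: for an arbitrary \(v \in T^{Cl}_{\mathcal{X}}(\hat{x}_I)\), one can extract a sequence of refining directions \(\{d_r\}_{r \in R} \subset T^H_{\mathcal{X}}(\hat{x}_I)\) for \(\hat{x}_I\) with \(d_r \to v\). Applying Theorem~\ref{thm:direction_clarke_stationary_h} to each \(d_r\) yields \(h^o(\hat{x}_I; d_r) \geq 0\) for every \(r \in R\). Passing to the limit inside the displayed identity above then gives \(h^o(\hat{x}_I; v) \geq 0\). Since \(v\) was arbitrary in \(T^{Cl}_{\mathcal{X}}(\hat{x}_I)\), this is exactly the Clarke stationarity condition for the single-objective problem \(\min_{x \in \mathcal{X}} h(x)\), which concludes the proof.

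I do not expect any genuine obstacle here. The multiobjective layer of complication present in Theorem~\ref{thm:Pareto_Clarke_stationnarity} (the need to select an index \(i(v) \in \{1,\ldots,m\}\) and then extract a further subsequence on which the same index recurs infinitely often, exploiting the finiteness of \(\{1,\ldots,m\}\)) simply disappears because \(h\) is scalar. The only thing to be careful about is checking that the two invoked hypotheses, Lipschitz continuity of \(h\) near \(\hat{x}_I\) and non-emptiness of the hypertangent cone, are the correct prerequisites for the Audet--Dennis limit identity and for Theorem~\ref{thm:direction_clarke_stationary_h}, which they are.
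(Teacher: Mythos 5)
Your proposal is correct and follows essentially the same route as the paper: invoke the Audet--Dennis limit identity relating \(h^o\) on the Clarke tangent cone to its values on the hypertangent cone, use density to pick refining directions \(d_r \to v\), apply Theorem~\ref{thm:direction_clarke_stationary_h} to each, and pass to the limit. Your observation that the index-selection subsequence argument of Theorem~\ref{thm:Pareto_Clarke_stationnarity} becomes unnecessary for the scalar function \(h\) is exactly right and matches the paper's (implicit) simplification.
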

	\begin{proof}
		The authors in~\cite{AuDe2006} prove than for any direction \(v\) in the Clarke tangent cone,
		\[h^0(\hat{x}_I; v) = \lim_{\begin{array}{c} d \in T_{\mathcal{X}}^{H}(\hat{x}_I) \\ d \rightarrow v \end{array}} h^o(\hat{x}_I; d).\]
		By hypothesis, the set of refining directions is dense for \(\hat{x}_{I} \in \mathcal{X}\) in \(T^{Cl}_{\mathcal{X}}(\hat{x}_I)\). Then there exists a sequence of refining directions \(\{d_r\}_{r \in R} \in T_{\mathcal{X}}^H\) for \(\hat{x}_I\) such that \(\lim_{r \in R} d_r = v\). By Theorem~\ref{thm:direction_clarke_stationary_h}, for all \(r \in R\), \(h^o(\hat{x}_I; d) \geq 0\). Passing at the limit concludes the proof.
	\end{proof}

	\section{Computational experiments}\label{sect:Numerical_experiments}
	
	This section is devoted to the computational experiments of DMulti-MADS on constrained multiobjective problems. The first part presents the considered solvers. The second part is dedicated to the comparison of all solvers and DMulti-MADS variants on a set of analytical problems using data profiles for multiobjective optimization~\cite{BiLedSa2020}. The last part shows comparison of solvers on ``real'' engineering problems using convergence profiles.
	
	To assess the performance of different algorithms, this work relies on the use of data profiles for multiobjective optimization~\cite{BiLedSa2020} and convergence profiles. Both tools require the definition of a convergence test for a given computational problem, based on the hypervolume indicator~\cite{Zitzler1998}.
	
	The hypervolume indicator represents the volume of the objective space dominated by a Pareto front approximation \(Y_N\) and delimited from above by a reference point \(r \in \mathbb{R}^m\) such that for all \(y \in Y_N\), \(y_i < r_i\) for \(i = 1,2,\ldots,m\). The hypervolume possesses many useful properties : Pareto compliant with the dominance ordering, it can capture many properties of a Pareto front approximation as spread, cardinality, convergence to the Pareto front, or extension~\cite{AuBiCaLedSa2018, Li2019}.
	
	The convergence test requires a Pareto front approximation reference \(Y^p\) for a given problem \(p \in \mathcal{P}\), where \(\mathcal{P}\) is the set of considered problems, from which the approximated ideal objective vector
	\[\tilde{y}^{I,p} = \left(\min_{y \in Y^p} y_1, \min_{y \in Y^p} y_2, \ldots, \min_{y \in Y^p} y_{m_p} \right)^\top\]
	and the approximated nadir objective vector
	\[\tilde{y}^{N,p} = \left(\max_{y \in Y^p} y_1, \max_{y \in Y^p} y_2, \ldots, \max_{y \in Y^p} y_{m_p} \right)^\top\]
	are extracted, with \(m_p\) the number of objectives of problem \(p \in \mathcal{P}\). \(Y^p\) is constructed using the set of best non dominated points found by all algorithms on problem \(p \in \mathcal{P}\) for a maximal budget of evaluations.
	
	Assuming \(Y^e\) is a Pareto front approximation generated after \(e\) evaluations by a given deterministic solver for problem \(p\), a scaling and translating transformation is applied to this last one defined by: \(\forall y \in Y^e \cup Y^p \cup \{\tilde{y}^{N,p}\}\),
	\[T(y) = \begin{cases}
		(y - \tilde{y}^{I,p}) \oslash (\tilde{y}^{N,p} -y) & \text{ if } \tilde{y}^{N,p} \neq \tilde{y}^{I,p}, \\
		y - \tilde{y}^{I,p} & \text{ otherwise;}
	\end{cases}\]
	where \(\oslash\) is the element wise-divisor operator. Note that this transformation conserves the dominance order relation. The computational problem is said to be solved by the algorithm with tolerance \(\varepsilon_\tau > 0\) if
	\[\dfrac{HV\left(T(Y^e), T(\tilde{y}^{N, p})\right)}{HV\left(T(Y^p), T(\tilde{y}^{N, p})\right)} \geq 1 - \varepsilon_\tau\]
	where \(HV(Y_N,r)\) is the hypervolume indicator value of the volume dominated by the Pareto front approximation \(Y_N\) and delimited above by the reference point \(r \in \mathbb{R}^m\). All elements of \(Y_N\) which are dominated by \(r \in \mathbb{R}^m\) are removed during the computation of the hypervolume indicator. If no element of \(Y_N\) dominates \(r\), then \(HV(Y_N, r) = 0\).
	
	Data profiles show the proportion of all computational problems solved by an algorithm in function of the number of groups of  \(n + 1\) evaluations required to build a gradient simplex in \(\mathbb{R}^n\). In these experiments, stochastic solvers are also considered. In this case, data profiles are modified to take into account their performance variability, as described in~\cite{BiLedSa2020}.
	
	\subsection{Tested solvers and variants of DMulti-MADS}
	
	The following constrained solvers are considered:
	\begin{itemize}
		\item the deterministic solver \texttt{NOMAD}~\cite{Le09b} which implements the BiMADS algorithm (Bi-objective Mesh Adaptive Direct Search)~\cite{AuSaZg2008a} tested only for \(m = 2\) objectives - \url{www.gerad.ca/nomad/};
		\item the deterministic solver DFMO (Derivative-Free Multi Objective)~\cite{LiLuRi2016} - \url{http://www.iasi.cnr.it/~liuzzi/DFL/};
		\item the stochastic heuristic solver NSGA-II (Non Dominating Sorting Algorithm II)~\cite{Deb2000}; a constrained version is implemented in the \texttt{Pymoo} Library~\cite{BlankDeb2020} version \texttt{0.4.2.2} - \url{https://pymoo.org}.
	\end{itemize}
	For the BiMADS algorithm, two variants based on \texttt{NOMAD 3.9.1} are considered. The first uses the default settings of the MADS algorithm, detailed in~\cite{AuIaLeDTr2014, AuTr2018, AuLeDTr2018, CoLed2011}. The second deactivates models and other heuristics such that BiMADS relies only on the MADS algorithm with \(n+1\) directions, a speculative search and an opportunistic polling strategy, for a fairer comparison with DMulti-MADS.
	DFMO and NSGA-II are used with their default settings. NSGA-II uses an initial population with \(100\) elements.
	
	In these experiments, this work considers another variant of DMulti-MADS for constrained multiobjective optimization based on the penalty approach used in~\cite{LiLuRi2016}. More specifically, given the constrained multiobjective problem \((MOP)\), the authors of~\cite{LiLuRi2016} introduce the following penalty functions
	\[Z_i(x; \varepsilon) = f_i(x) + \dfrac{1}{\epsilon} \sum_{j \in \mathcal{J}} \max \{0, c_i(x)\}, \ i = 1, 2,\ldots, m\]
	where \(\epsilon > 0\) is an external parameter and consider the following multiobjective problem
	\[(MOP_p) : \min_{x \in \mathcal{X}} Z(x) = \left(Z_1(x; \epsilon), Z_2(x; \epsilon), \ldots, Z_m(x; \epsilon)\right)^\top.\]
	The DMulti-MADS-Penalty variant uses the DMulti-MADS-TEB variant on the modified \((MOP_p)\) multiobjective problem. The external parameter \(\epsilon > 0\) is set to the default value proposed by~\cite{LiLuRi2016}. Note that this approach has already been used by these authors to compare DMS (which cannot start from infeasible points) and DFMO on constrained multiobjective problems~\cite{LiLuRi2016}. As the first strategy proposed to handle constraints with convergence results,
	it is natural to see if this approach performs well compared to the two new variants proposed in this paper.
	
	For all constrained variants of DMulti-MADS, a speculative search strategy is implemented as in~\cite{BiLedSa2020} for one or both feasible and infeasible current incumbents if they exist, combined with a polling strategy with \(n+1\) directions~\cite{AuIaLeDTr2014}. The implementation of the mesh follows a granular mesh strategy~\cite{AuLeDTr2018}. All variants stop as soon as one component of the mesh size vector is below \(10^{-9}\) or after running out of evaluations budget. All variants use an opportunistic strategy: as soon as a new candidate dominates at least one current incumbent, the iteration stops. All variants also apply a spread selection with parameter value \(w^+=1\). For the DMulti-MADS-PB variant, the trigger parameter is set to \(\rho =0.1\). When DMulti-MADS-TEB switches from the first phase to the second phase, the frame and mesh size parameters of the generated feasible points are not resettled to their respective initial values \(\Delta^0\) and \(\delta^0\).
	
	Finally, the implementation of the progressive barrier in \texttt{NOMAD 3.9.1} for the BiMADS algorithm diverges from the description given in~\cite{AuDe09a} for efficiency gains. The DMulti-MADS-PB algorithm variant equally incorporates these modifications for a fairer comparison with the implementation of \texttt{NOMAD 3.9.1}. Precisely, the threshold \(h^k_{\max}\) is updated according to the set \(U^{k+1} \subseteq V^{k+1}\), which enables it to decrease faster. Furthermore, in the implementation, an iteration \(k\) is considered as improving if the algorithm generates a point \(x^t \in V^{k+1} \setminus V^k\) satisfying improving conditions. Note that the convergence properties for infeasible refining subsequences still hold. However, it may exist a point \(x \in \cup_{k \in \mathbb{N}} V^k\) with \(0 < h(x) < h(\hat{x}^I)\) where \(\hat{x}^I \in \mathcal{X}\) is an infeasible refining point.
	
	The code used for experiments can be found at \url{https://github.com/bbopt/DMultiMadsPB}.
	
	\subsection{Comparing solvers on synthetic benchmarks}
	
	In this subsection, this work considers a set of \(214\) analytical multiobjective optimization problems proposed by~\cite{LiLuRi2016}, with \(n \in [3,30]\), \(m \in \{2,3,4\}\) and \(|\mathcal{J}| \in [3,30]\). Among them, \(103\) problem possess \(m = 2\) objectives.
	
	In a first part, this work compares the three variants of DMulti-MADS on this set of problems. The three of them use a maximum budget of \(30,000\) evaluations. For each problem, the three variants start from the same set of initial points, using the linesearch strategy described in~\cite{CuMaVaVi2010}. Each variant on each problem executes \(10\) replications by changing the random seed which controls the generation of polling directions.
	
	\begin{figure}[!th]
		\centering
		\subfigure[\(\varepsilon_{\tau}=10^{-2}\)]{
			\includegraphics[scale=0.6]{./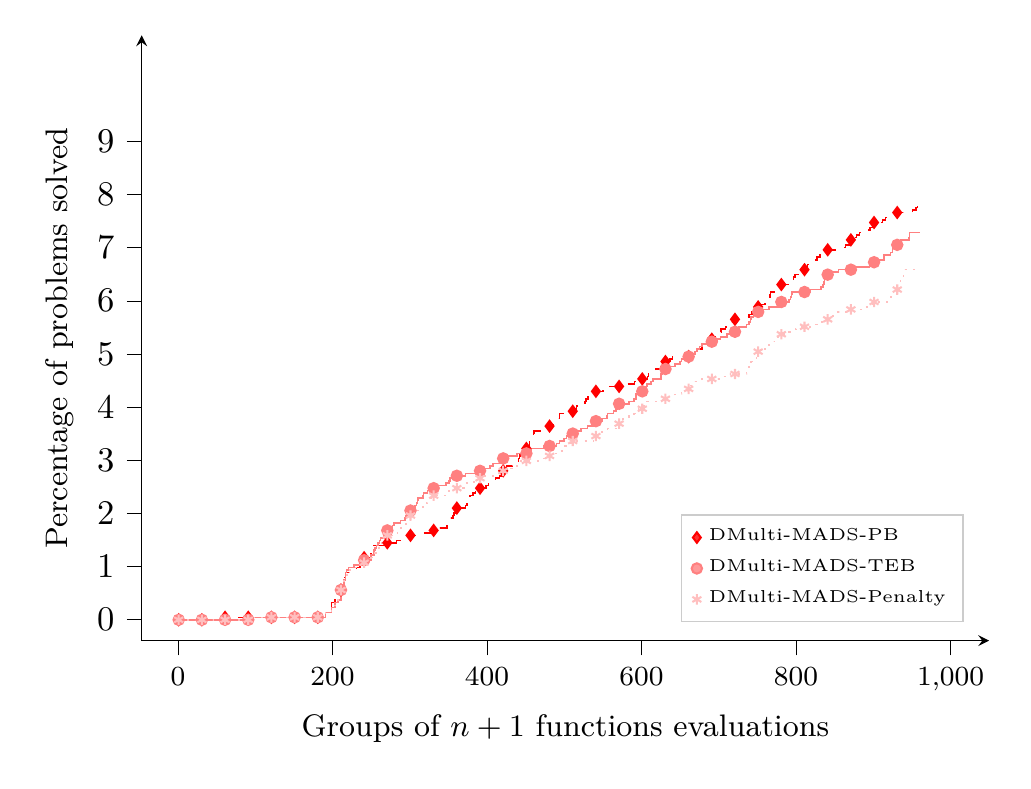}}
		\quad
		\subfigure[\(\varepsilon_{\tau}=5 \times 10^{-2}\)]{
			\includegraphics[scale=0.6]{./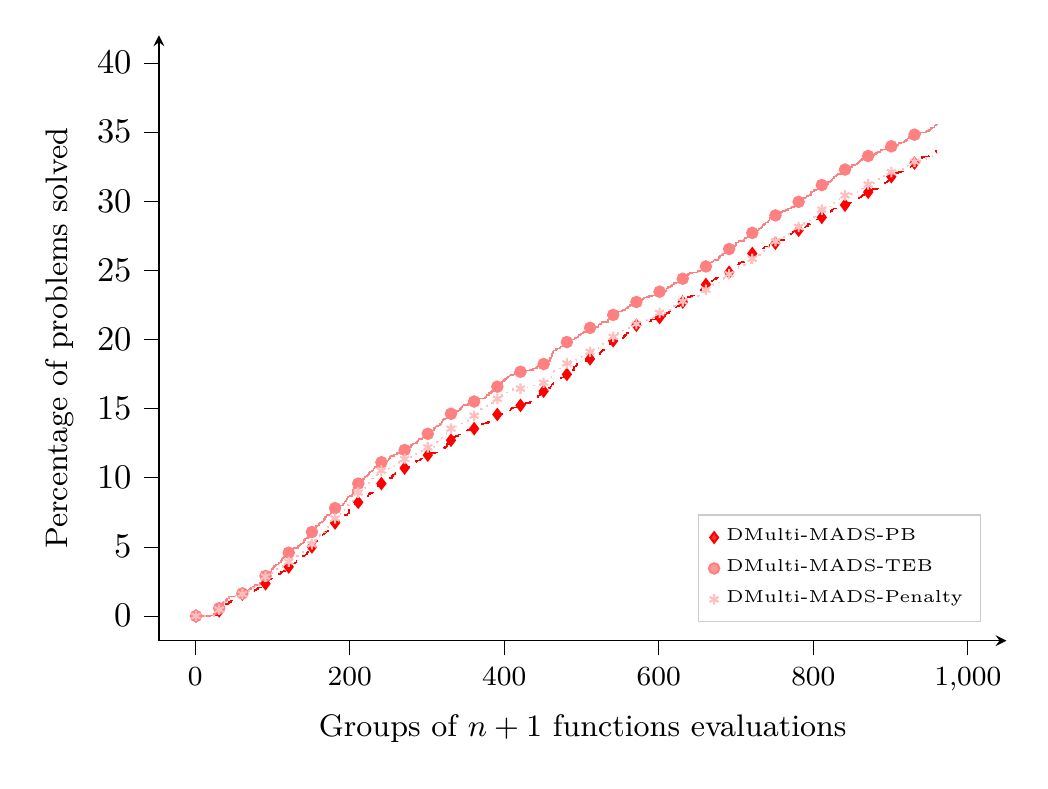}}
		\quad
		\subfigure[\(\varepsilon_{\tau}=10^{-1}\)]{
			\includegraphics[scale=0.6]{./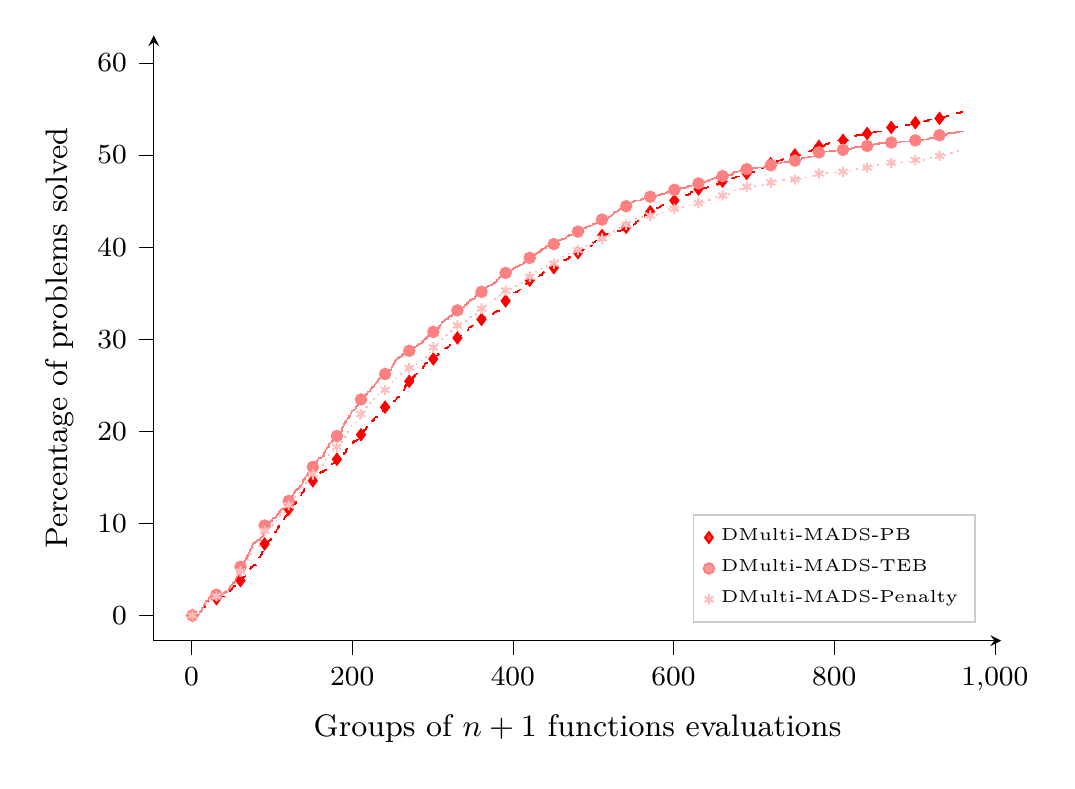}}
		\caption{Data profiles obtained from \(10\) replications from \(214\) multiobjective analytical problems taken from~\cite{LiLuRi2016} for DMulti-MADS-PB, DMulti-MADS-TEB and DMulti-MADS-Penalty with tolerance \(\varepsilon_{\tau} \in \{10^{-2}, 5 \times 10^{-2}, 10^{-1}\}\).}
		\label{fig:dmultimads_variants_synthetic_benchmarks}
	\end{figure}

	The data profiles given in Figure~\ref{fig:dmultimads_variants_synthetic_benchmarks} show that for the three tolerance values considered, DMulti-MADS-Penalty solves slightly less problems than the two other variants introduced in this work. One can equally observe than DMulti-MADS-PB performs better for a medium to high budget of evaluations for the lowest tolerance \(\varepsilon_{\tau} = 10^{-2}\). For the largest tolerance \(\varepsilon_{\tau} = 10^{-1}\), DMulti-MADS-PB solves more problems for a high budget of evaluations. However, for medium tolerance, the performance of DMulti-MADS-PB is similar to DMulti-MADS-Penalty. A closer look at the considered problems shows than in this case, it is better to firstly look for feasible solutions than to explore the infeasible decision space. It then gives an advantage to DMulti-MADS-TEB over the two other variants. For the rest of this subsection, only DMulti-MADS-PB and DMulti-MADS-TEB are kept, as they are more performant.
	
	For the comparison with the other algorithms, the same maximum budget of \(30,000\) function evaluations is kept. Practically, for NSGA-II, the total number of population generations is fixed to \(300\), with a fixed population size equal to \(100\). For each problem, the deterministic solvers start from the same initial points using the linesearch strategy~\cite{CuMaVaVi2010}. For each problem, NSGA-II is run \(30\) times with different seeds to capture stochastic behavior and analyze its performance variation.
	
	\begin{figure}[!th]
		\centering
		\subfigure[\(\varepsilon_{\tau}=10^{-2}\)]{
			\includegraphics[scale=0.6]{./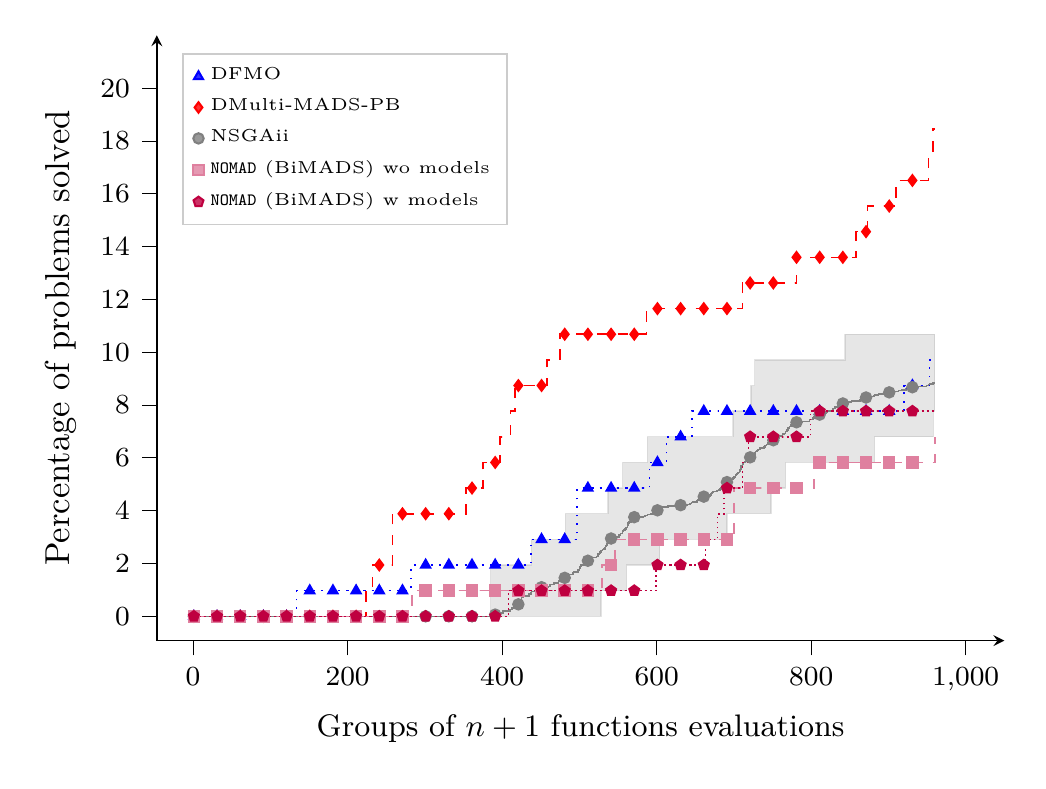}}
		\quad
		\subfigure[\(\varepsilon_{\tau}=5 \times 10^{-2}\)]{
			\includegraphics[scale=0.6]{./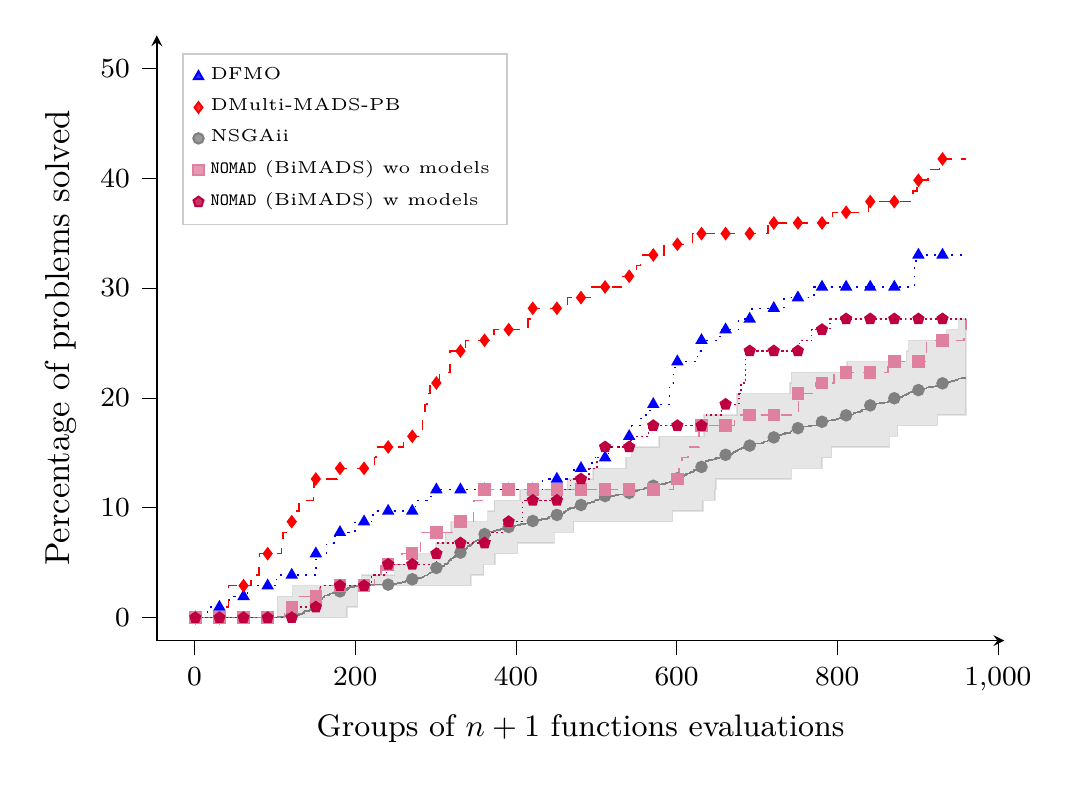}}
		\quad
		\subfigure[\(\varepsilon_{\tau}=10^{-1}\)]{
			\includegraphics[scale=0.6]{./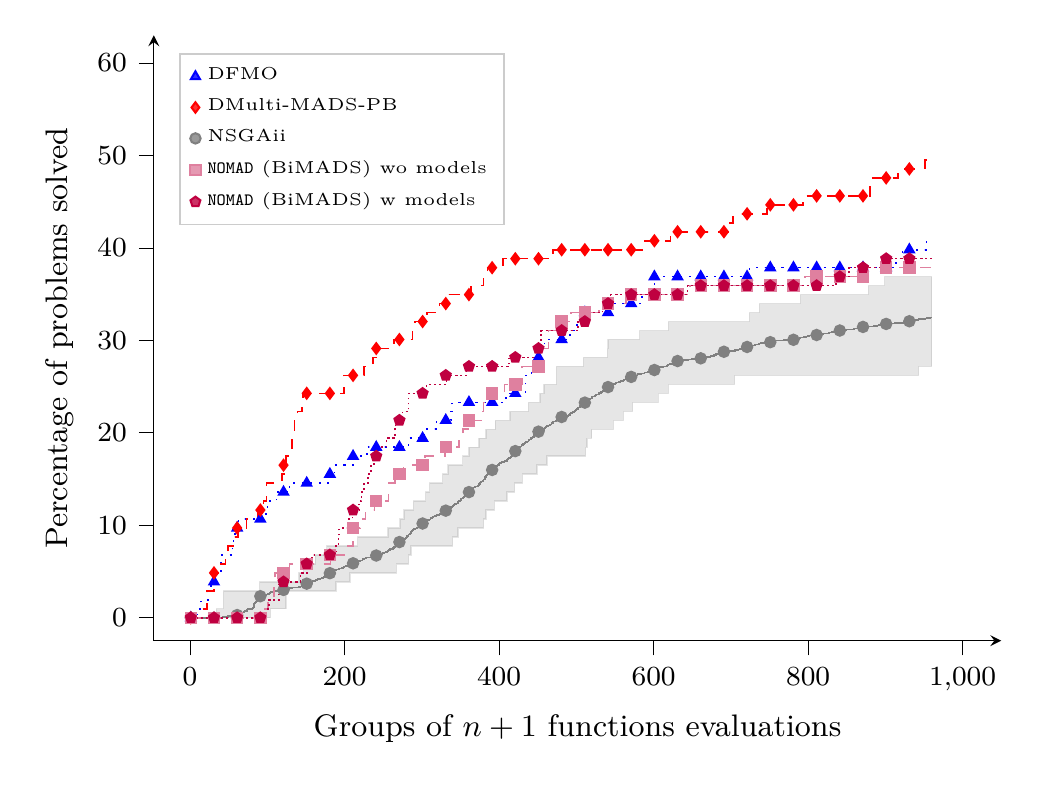}}
		\caption{Data profiles using \texttt{NOMAD} (BiMADS), DFMO, DMulti-MADS-PB and NSGA-II obtained on \(103\) biojective analytical problems from~\cite{LiLuRi2016} with \(30\) different runs of NSGA-II with tolerance \(\varepsilon_{\tau} \in \{10^{-2}, 5 \times 10^{-2}, 10^{-1}\}\).}
		\label{fig:PB_vs_others_2obj_synthetic_benchmarks}
	\end{figure}

	\begin{figure}[!th]
		\centering
		\subfigure[\(\varepsilon_{\tau}=10^{-2}\)]{
			\includegraphics[scale=0.6]{./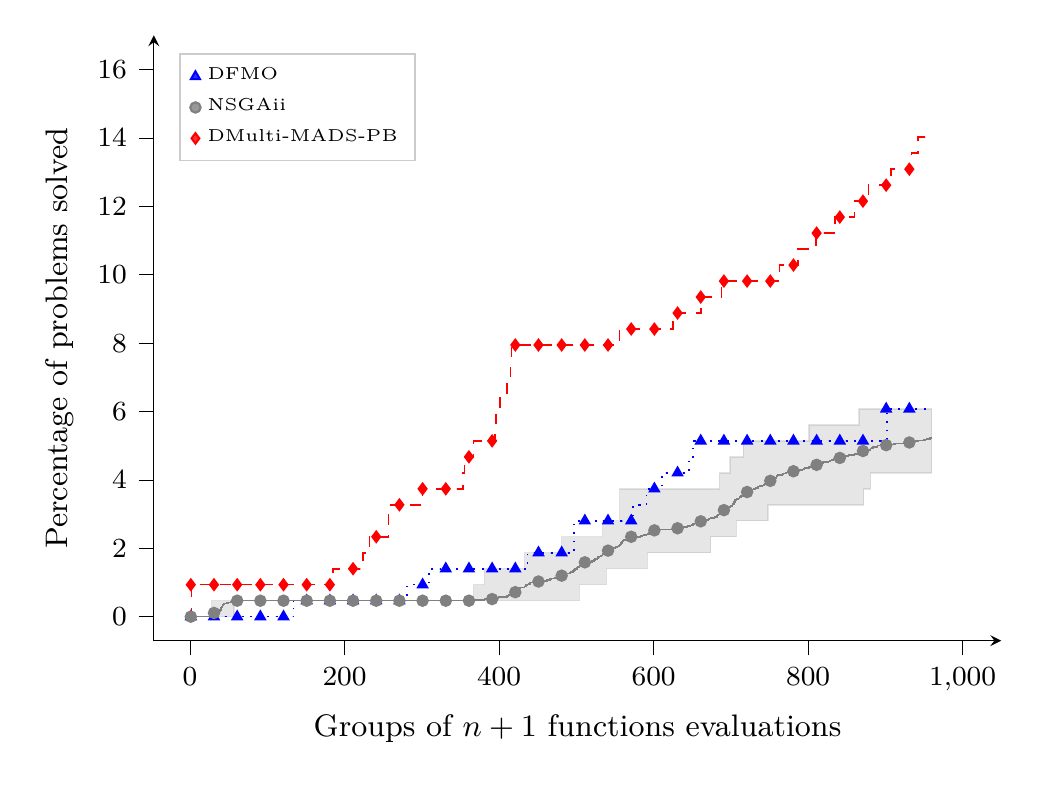}}
		\quad
		\subfigure[\(\varepsilon_{\tau}=5 \times 10^{-2}\)]{
			\includegraphics[scale=0.6]{./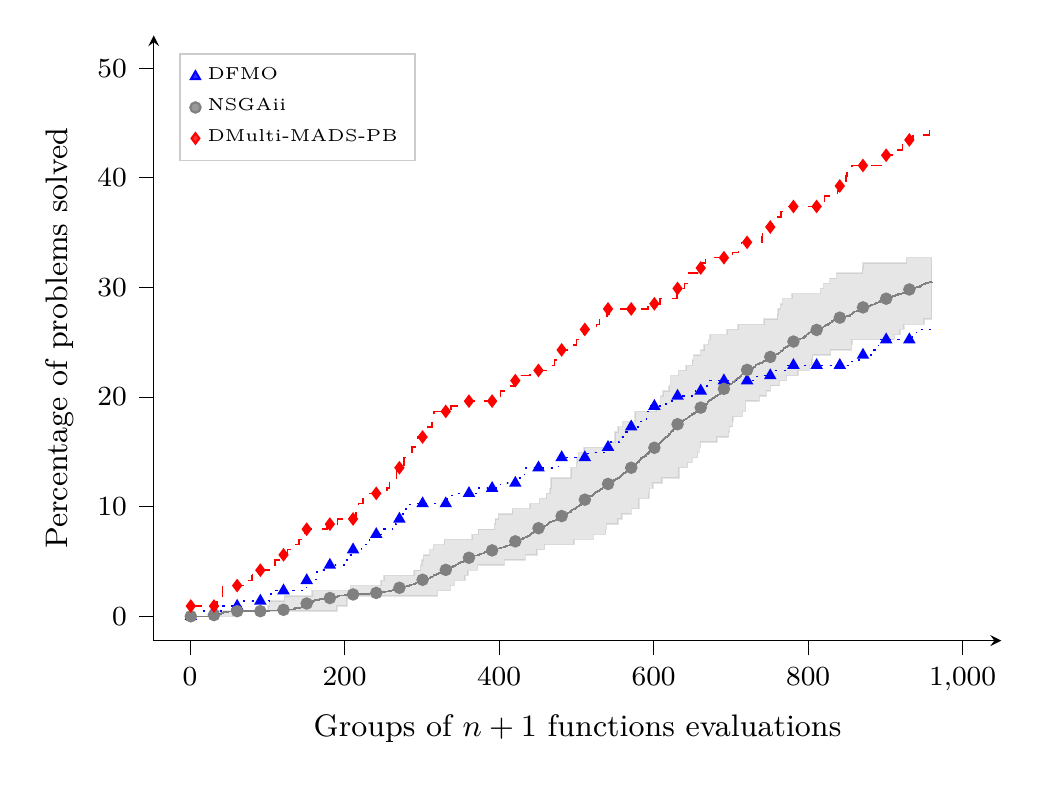}}
		\quad
		\subfigure[\(\varepsilon_{\tau}=10^{-1}\)]{
			\includegraphics[scale=0.6]{./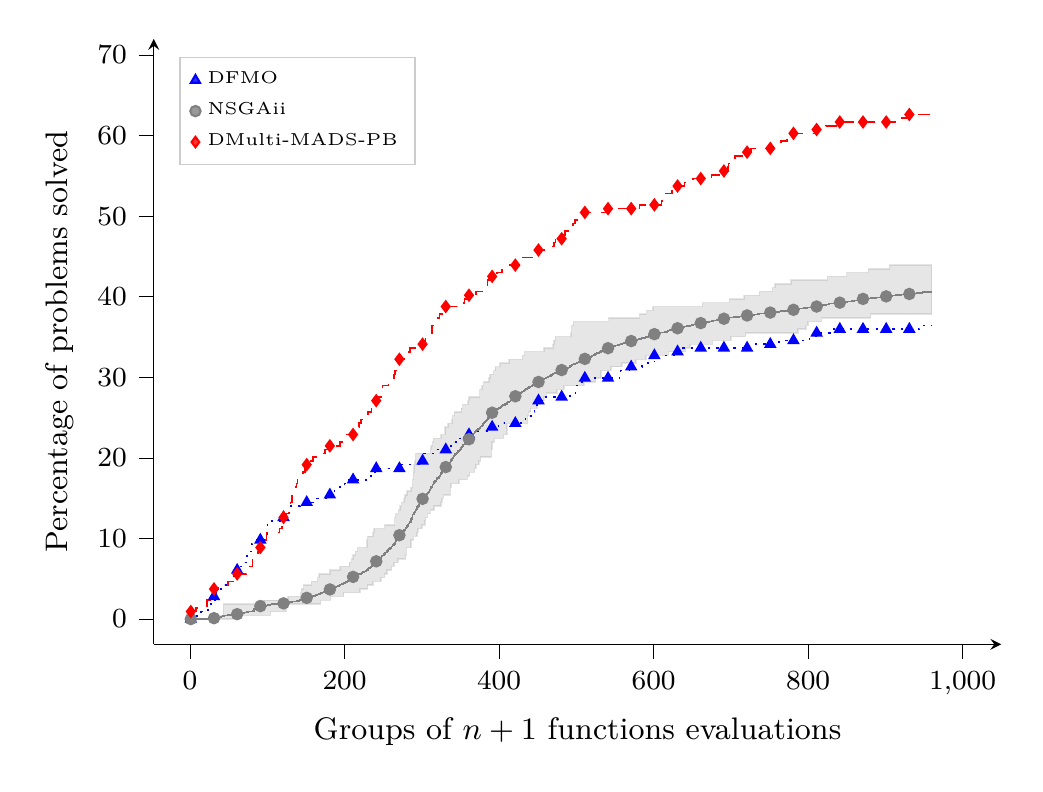}}
		\caption{Data profiles using DFMO, DMulti-MADS-PB and NSGA-II obtained on \(214\) multiobjective analytical problems from~\cite{LiLuRi2016} with \(30\) different runs of NSGA-II with tolerance \(\varepsilon_{\tau} \in \{10^{-2}, 5 \times 10^{-2}, 10^{-1}\}\).}
		\label{fig:PB_vs_others_3obj_synthetic_benchmarks}
	\end{figure}

	From Figures~\ref{fig:PB_vs_others_2obj_synthetic_benchmarks} and~\ref{fig:PB_vs_others_3obj_synthetic_benchmarks}, one can see that DMulti-MADS-PB outperforms the other solvers on this set of analytical functions, for all tolerances considered. The same conclusions can be drawn for DMulti-MADS-TEB. From these figures, one can also observe than DFMO displays better performance on biojective problems than for the whole set (see Figure~\ref{fig:PB_vs_others_2obj_synthetic_benchmarks}).

	\subsection{Comparing solvers on real engineering benchmarks}
	
	In this subsection, this work considers three multiobjective optimization problems: the biobjective
	SOLAR8 and SOLAR9 design problems and the triobjective STYRENE design problem~\cite{AuBeLe08, AuSaZg2010a}. These three applications are more costly to solve than the analytical benchmarks considered in the previous subsection. The use of data profiles to compare solvers on these problems is then difficult to put into practice.
	
	To assess the performance of solvers on these problems, an adaptation of convergence profiles (see~\cite[Appendix \(A\)]{AuHa2017} for a description) to multiobjective optimization is proposed. Convergence profiles for multiobjective optimization make use of the normalized hypervolume value, presented at the beginning of Section~\ref{sect:Numerical_experiments} and given by:
	\[\dfrac{HV\left(T(Y^e), T(\tilde{y}^{N,p})\right)}{HV(T(Y^p), T(\tilde{y}^{N,p}))}\]
	where \(Y^p\) is the Pareto front approximation reference for problem \(p\), \(Y^e\) the Pareto front approximation generated after \(e\) evaluations by a given solver on an instance of problem \(p\), \(T\) a scaling and translating transformation applied and \(\tilde{y}^{N,p}\) the approximated nadir objective vector of \(Y^p\).
	
	Convergence profiles for multiobjective optimization on a given problem \(p\) visualize the evolution of the normalized hypervolume indicator for a given solver against the number of evaluations used. Consequently, a normalized hypervolume value equal to \(1\) means that the solver has solved the problem \(p\). A normalized hypervolume equal to \(0\) means that the solver has not generated points which dominate the approximated nadir objective vector of the Pareto front approximation reference.
	
	\subsubsection{Comparing solvers on the SOLAR8 and SOLAR9 design problems}
	
	SOLAR8 and SOLAR9 are two biobjective optimization problems derived from a numerical simulator coded in C++ of a solar plant with a molten salt heat storage system~\cite{MScMLG}. The simulation is composed of three steps. The heliostats field captures sun rays which are transmitted to a central cavity receiver. The sun energy is given to the thermal storage which converts it to thermal energy. This last one activates the powerblock, which triggers a steam turbine, generating electrical power output. Numerical simulations intervene all along the different phases of the process, which make it impossible to provide gradients. For more details, the reader can refer to~\cite{MScMLG}. The simulator can be found at \url{https://github.com/bbopt/solar}.
	
	For the two considered problems, a blackbox evaluation can take more than \(10\) seconds (on a machine with \(8\) Intel(R) Core(TM) i7-2600K CPU @ 3.40GHz 16G RAM). Experiments equally reveal the presence of hidden constraints. Tables~\ref{tab:Solar_8_characteristics} and~\ref{tab:Solar_9_characteristics} describe the objectives, constraints and starting points used for each problem.
	
	\begin{figure}[!th]
		\centering \small
		\begin{tabular}{ll}
			\hline
			Constraints/Objectives & Description of constraints and objectives \\
			\hline
			\(-f_1\) & Maximize heliostat field performance (absorbed energy) \\
			\(f_2\) & Minimize cost of field, tower and receiver  \\
			Heliostat design constraints & Four constraints related to the dimensions of the heliostat field \\
			Receiver constraints & Three constraints related to the design of the receiver \\
			Energy constraints & Two constraints which depend on the energy production\\		
			\hline
			Variables & Description and type \\
			\hline
			Heliostats field & Nine variables related to the dimensions of the heliostats field \\
			& Eight real and one integer \\
			Heat transfer loop & Four variables related to the design of the heat transfer system \\
			& Three real and one integer \\
			\hline
			Starting point (infeasible) & \((11.0, 11.0, 200.0, 10.0, 10.0, 2650, 89.0, 0.5, 8.0, 36, 0.30, 0.020, 0.0216)\) \\
			\hline
		\end{tabular}
		\caption{Objectives, constraints, variables and starting point of the SOLAR8 problem.}
		\label{tab:Solar_8_characteristics}
	\end{figure}
	
	\begin{figure}[!th]
		\centering\small
		\begin{tabular}{ll}
			\hline
		 	Constraints/Objectives & Description of constraints and objectives \\
		 	\hline
		 	\(f_1\) &  Minimize production costs \\
		 	\(-f_2\) & Maximize energy production \\
		 	Heliostats design constraints & Four constraints related to the dimensions of the heliostat field \\
		 	Heat storage constraints & Four constraints relative to the molten salt heat thermic/pressure \\
		 	& storage system \\
		 	Receiver design constraints & Two constraints which depend on the tube size and diameter receiver \\
		 	Steam constraints & Five constraints related to steam temperature, power output \\
		 	& and steam design. \\
		 	\hline
		 	Variables & Description and type \\
		 	\hline
		 	Heliostats field & Nine variables related to the dimensions of the heliostats field \\
		 	& Eight real and one integer \\
		 	Heat transfer loop & Nineteen variables related to the design of the heat transfer system \\
		 	& Fourteen real and five integer \\
		 	Powerblock & One variable: type of turbine; integer \\
		 	\hline
		 	Starting point (infeasible) & \((9.0, 9.0, 150.0, 6.0, 8.0, 1000, 45.0, 0.5, 5.0, 900.0,\) \\
		 	& \(9.0, 9.0, 0.30, 0.20,560.0, 500, 0.30, 0.0165, 0.018, 0.017,\) \\
		 	& \(10.0, 0.0155, 0.016, 0.20, 3, 12000, 1, 2, 2)\) \\
		 	\hline
		\end{tabular}
		\caption{Objectives, constraints, variables and starting point of the SOLAR9 problem.}
		\label{tab:Solar_9_characteristics}
	\end{figure}

	SOLAR8 and SOLAR9 both possess integer decision variables. In the experiments, the only solver which can treat integer variables is \texttt{NOMAD} (BiMADS). Consequently, for the other solvers, all integer variables are fixed to their starting values along the optimization. For the SOLAR8 problem, three variants of \texttt{NOMAD} (BiMADS): two for which integer variables are fixed and one which treat mixed integer (MI) problems. For this last variant, the algorithmic parameters are chosen by default.
	
	\begin{remark}
		For SOLAR9, \texttt{NOMAD} (BiMADS) completely outperforms the other algorithms when it can modify integer variables. After investigation, this behaviour is not related to the performance of the algorithm, but the initial choice of the integer variables. However, for the sake of reproducibility, these values are kept. 
	\end{remark}

	All deterministic algorithms are allocated a maximal budget of \(5,000\) evaluations and start from the same infeasible point for each problem. NSGA-II does not take starting points as arguments. To compare it with the others, NSGA-II is run \(10\) times to capture stochastic behaviour, with a population size fixed to \(100\) and a total number of generations equal to \(50\).
	
	\begin{figure}[!th]
		\centering
		\subfigure{\includegraphics[scale=0.7]{./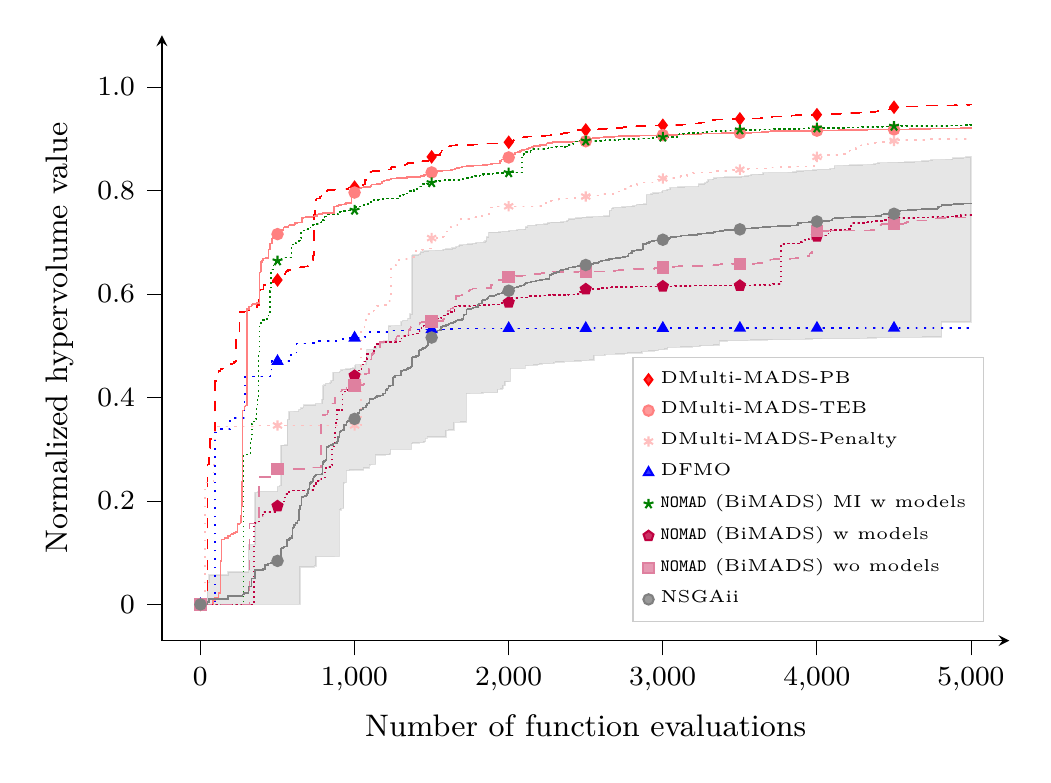}
		\label{fig:solar8_convergence_profiles}}
		\quad
		\subfigure{\includegraphics[scale=0.7]{./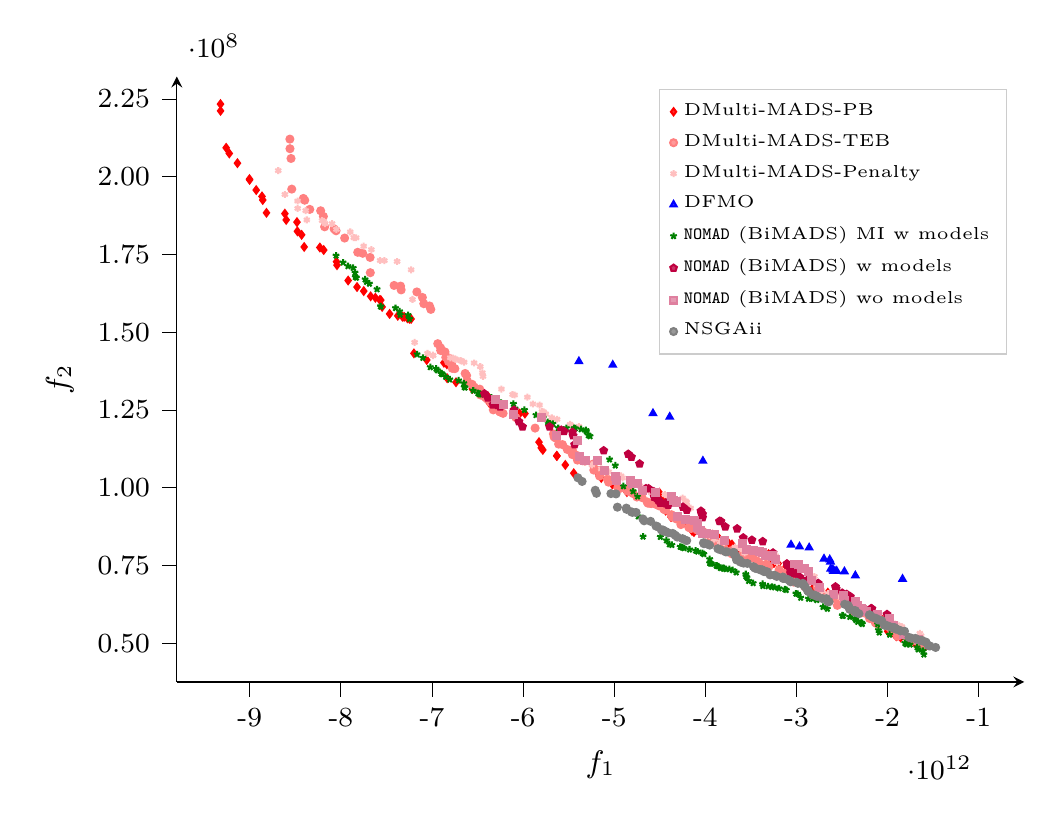}
		\label{fig:solar8_pareto_fronts}}
		\caption{(a) On the left, convergence profiles for the SOLAR8 problem using DFMO, DMulti-MADS, \texttt{NOMAD} (BiMADS) and NSGA-II with \(10\) different runs of NSGA-II for a maximal budget of \(5,000\) evaluations. (b) On the right, Pareto front approximations obtained at the end of the resolution of SOLAR8 for DFMO, DMulti-MADS, \texttt{NOMAD} (BiMADS) and an instance of NSGA-II in the objective space.}
	\end{figure}

	From Figure~\ref{fig:solar8_convergence_profiles}, one can see that DMulti-MADS-PB performs better than the other algorithms on SOLAR8. When looking at the Pareto front plottings (Figure~\ref{fig:solar8_pareto_fronts}), one can note that DMulti-MADS-PB captures a portion of the Pareto front on the top left. DMulti-MADS-TEB is slightly better than DMulti-MADS-Penalty and compares well in terms of performance with \texttt{NOMAD} (BiMADS) when allowing the use of mixed integer variables. DFMO does not perform well on this problem, due to the different scales on the constraints included in the penalty objective function, which impacts its efficiency.

	\begin{figure}[!th]
	\centering
	\subfigure{\includegraphics[scale=0.7]{./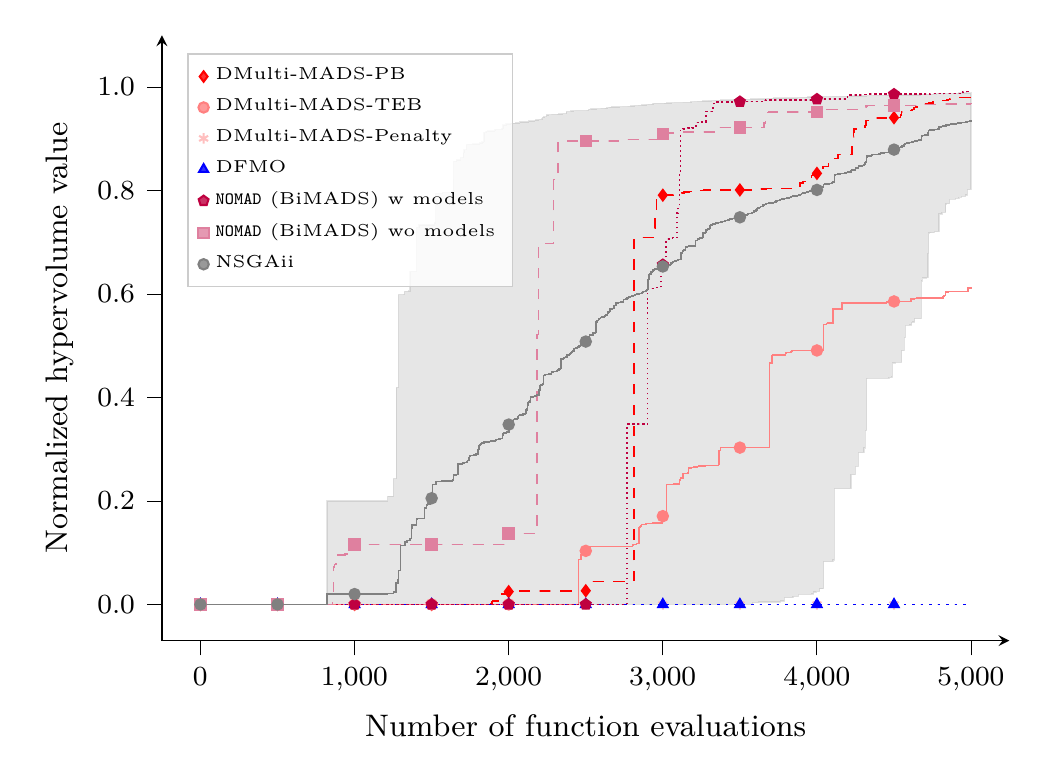}
		\label{fig:solar9_convergence_profiles}}
	\quad
	\subfigure{\includegraphics[scale=0.7]{./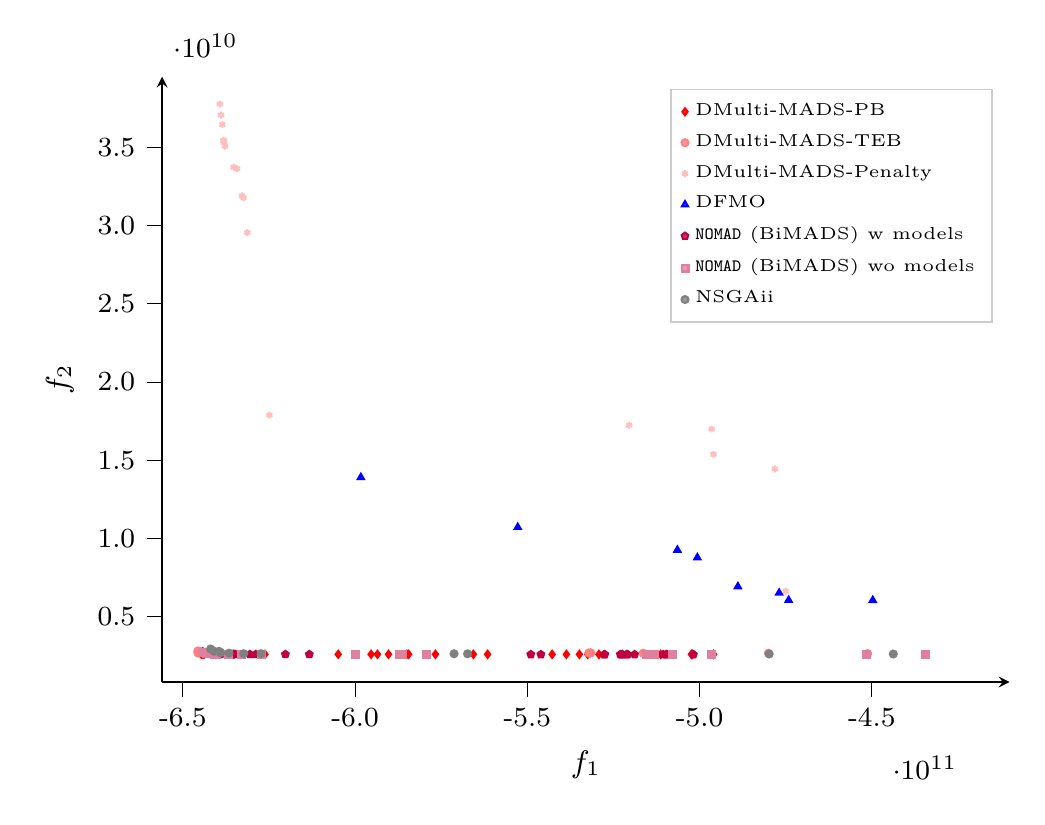}
		\label{fig:solar9_pareto_fronts}}
	\caption{(a) On the left, convergence profiles for the SOLAR9 problem using DFMO, DMulti-MADS, \texttt{NOMAD} (BiMADS) and NSGA-II with \(10\) different runs of NSGA-II for a maximal budget of \(5,000\) evaluations. (b) On the right, Pareto front approximations obtained at the end of the resolution of SOLAR9 for DFMO, DMulti-MADS, \texttt{NOMAD} (BiMADS) and an instance of NSGA-II in the objective space.}
	\end{figure}

	Figure~\ref{fig:solar9_convergence_profiles} shows the convergence profiles for the SOLAR9 problem for different solvers. On this problem, \texttt{NOMAD} (BiMADS) are the most efficient, even if DMulti-MADS-PB catches it for the last evaluations. As shown on Figure~\ref{fig:solar9_pareto_fronts}, the extent of the Pareto front approximation reference is low, which favours scalarization-based approaches such as BiMADS. This problem also illustrates the default of penalty-based approaches against other methods. As the constraint functions possess different amplitudes, the penalized optimization problem differs from the original, which explains why DFMO and DMulti-MADS-Penalty fail.

	\subsubsection{Comparing solvers on the STYRENE design problem}
	
	STYRENE is a triobjective optimization problem related to the production of styrene, as described in~\cite{AuBeLe08, AuSaZg2010a}. Styrene production process is composed of four steps: reactants preparation, catalytic reactions, a first distillation to recover styrene and a second one to recover benzene. The second distillation equally involves the recycling of unreacted ethylbenzaline, reintroduced into the styrene production as an initial reactant. The proposed triobjective optimization problem, based on a numerical implementation coded in C++, aims at maximizing the net present value associated to the process (\(f_1\)), the purity of produced styrene (\(f_2\)), and the overall ethylbenzene conversion into styrene (\(f_3\)). This application possesses eight bounded variables, and nine general inequality constraints related to the chemical process (e.g. environmental regulations), or costs (e.g. investment). More details can be found in~\cite{AuSaZg2010a}.
	
	A simulation takes around \(1\) second to run, starting from a feasible point (on a machine with \(8\) Intel(R) Core(TM) i7-2600K CPU @ 3.40GHz 16G RAM). This problem has hidden constraints. Even when starting from a feasible point, the simulation can sometimes fail to produce a finite numerical value.
	
	A maximal budget of \(20,000\) evaluations is allocated for all deterministic solvers, which all start from the same feasible point. This experiment does not consider \texttt{NOMAD} (BiMADS), as it only treats biojective problems. NSGA-II is run \(10\) times, with a population size fixed to \(100\), and a maximal number of generations equal to \(200\).
	
	\begin{figure}[!th]
		\centering
		\subfigure{\includegraphics[scale=0.7]{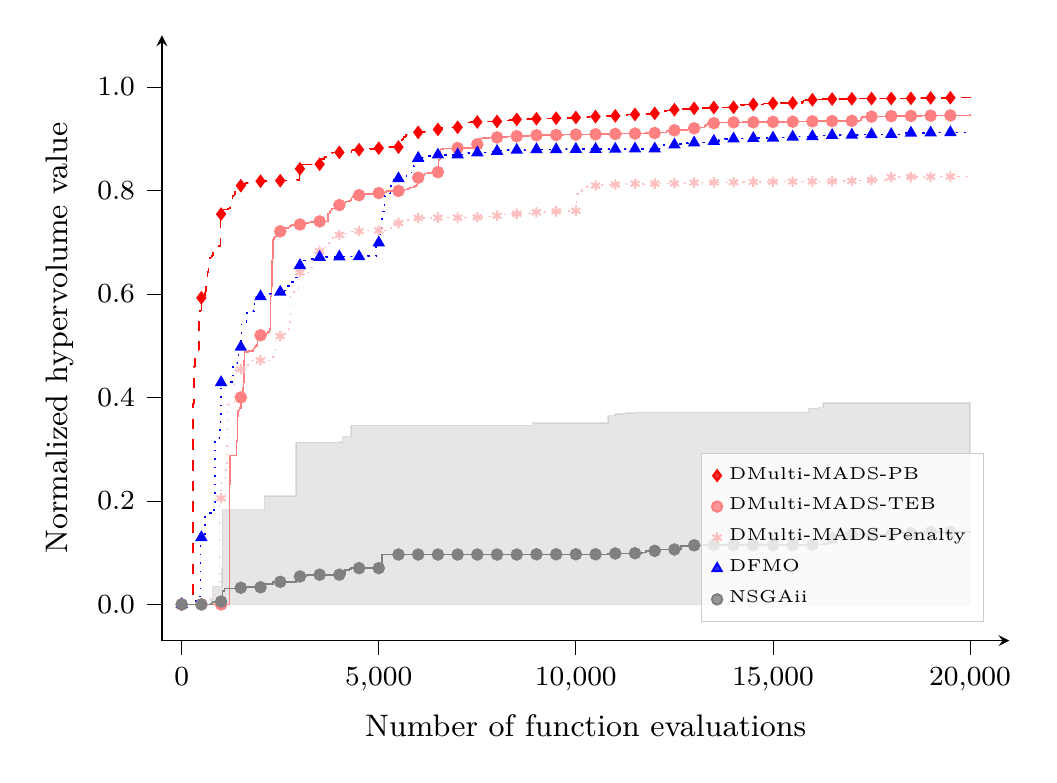}
			\label{fig:styrene_convergence_profiles}}
		\subfigure{\includegraphics[scale=0.7]{./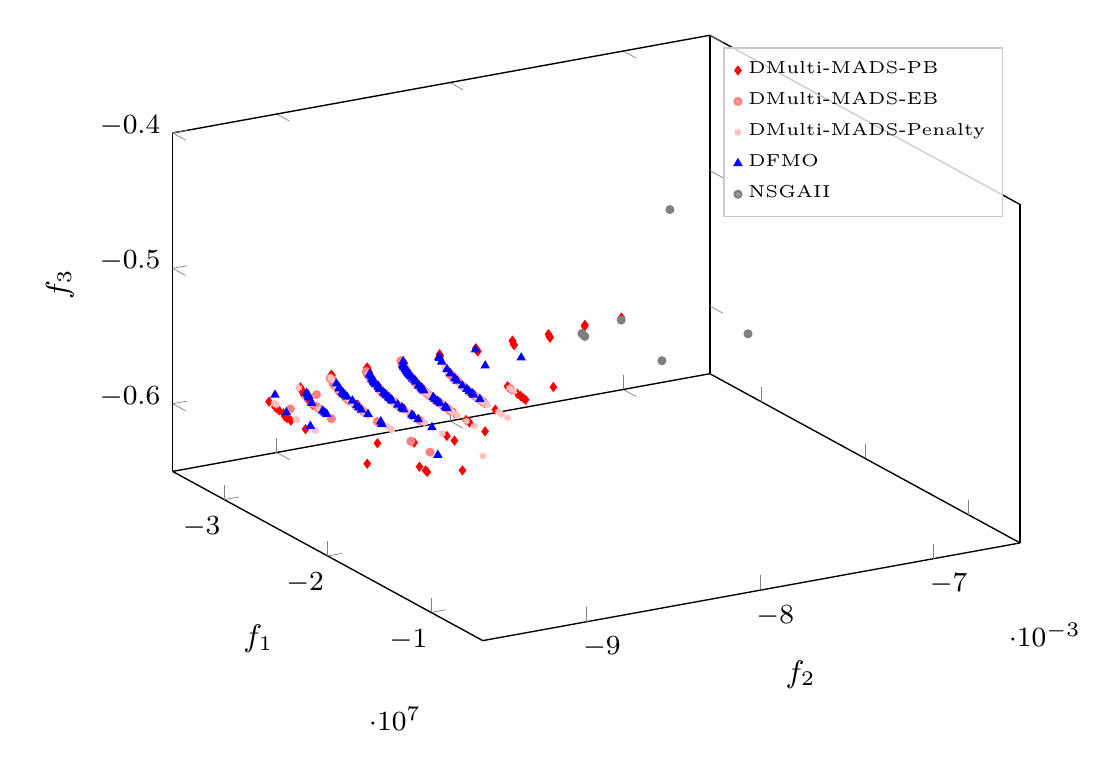}
			\label{fig:styrene_pareto_fronts}}
		\caption{(a) On the left, convergence profiles for the STYRENE design problem using DFMO, DMulti-MADS, and NSGA-II with \(10\) different runs of NSGA-II for a maximal budget of \(20,000\) evaluations. (b) On the right, Pareto front approximations obtained at the end of the resolution of STYRENE for DFMO, DMulti-MADS, and an instance of NSGA-II in the objective space.}
	\end{figure}

	Figure~\ref{fig:styrene_convergence_profiles} shows the convergence profiles obtained for the STYRENE design triobjective problem. This figure shows that DMulti-MADS-PB performs better than the other solvers, followed by DMulti-MADS-TEB.  From Figure~\ref{fig:styrene_pareto_fronts}, one can observe that DMulti-MADS-PB captures more parts of the Pareto front reference than all the other methods. Finally, even when taking into account variability, NSGA-II is less efficient than all the other solvers on this problem.

\section{Discussion}

This work proposes two extensions of the DMulti-MADS algorithm~\cite{BiLedSa2020} to handle blackbox constraints, generalizing the works conducted for the single-objective MADS algorithm~\cite{AuDe09a, AuDeLe10}. It is proved that these two extensions possess the same convergence properties than DMulti-MADS~\cite{BiLedSa2020} when studying feasible sequences generated by these two extensions. Convergence analysis for the infeasible case is also derived, as in~\cite{AuDe09a}.

Experiments show that these two variants are competitive comparing to other state-of-the-art methods, and more robust on real engineering applications than a penalty-based approach, as proposed in~\cite{LiLuRi2016}. These experiments also reveal that a two-phase approach performs surprisingly well on blackbox multiobjective optimization problems, contrary to single-objective ones~\cite{AuDeLe10}.
Future work involves the integration of surrogate methods into a search strategy~\cite{BraCu2020, CoLed2011}, and the use of parallelism.
 An integration in the \texttt{NOMAD} solver is also planned.

\bibliographystyle{plain}
\bibliography{bibliography}

\end{document}